\setlist[enumerate]{itemsep=2pt,topsep=3pt}
\setlist[itemize]{itemsep=2pt,topsep=3pt}
\setlist[enumerate,1]{label={\upshape (\alph*)}}
\renewcommand{\leq}{\leqslant}
\renewcommand{\geq}{\geqslant}
\providecommand{\inner}[1]{\left\langle{#1}\right\rangle}
\newcommand{\algoref}[1]{Algorithm~\hyperlink{#1}{\ref*{#1}}}
\newcommand{\setntn}[2]{ \{ #1 : #2 \} }
\newcommand{\1}{\mathbbm 1}
\newcommand*\diff{\mathop{}\!\mathrm{d}}
\newcommand{\bB}{\mathscr B}
\newcommand{\dD}{\mathcal D}
\newcommand{\RR}{\mathbbm R}
\newcommand{\NN}{\mathbbm N}
\newcommand{\TT}{\mathbbm T}
\newcommand{\Asf}{\mathsf A}
\newcommand{\Gsf}{\mathsf G}
\newcommand{\Xsf}{\mathsf X}
\newcommand{\Wsf}{\mathsf W}
\renewcommand{\phi}{\varphi}
\renewcommand{\epsilon}{\varepsilon}
\def\namedlabel#1#2{\begingroup
    #2%
    \def\@currentlabel{#2}%
    \phantomsection\label{#1}\endgroup
}
\theoremstyle{plain}
\newtheorem{theorem}{Theorem}[section]
\newtheorem{lemma}[theorem]{Lemma}
\newtheorem{proposition}[theorem]{Proposition}
\theoremstyle{definition}
\newcommand{\navy}[1]{\textit{\textcolor{Blue}{#1}}}
\renewcommand{\@evenhead}{\small\rlap{\thepage}\hfill \scriptsize 
JOHN STACHURSKI, JINGNI YANG, ZIYUE YANG\hfill}
\begin{document}
\title[Global Optimality]
{Dynamic Programming: \\ From Local Optimality to Global Optimality}

\author{
    John Stachurski\textsuperscript{$\ast$}, 
    Jingni Yang\textsuperscript{$\dagger$}, 
    Ziyue Yang\textsuperscript{$\ddagger$}
}

\thanks{\textsuperscript{$\ast$}Research School of Economics, 
Australian National University. 
\href{mailto:john.stachurski@anu.edu.au}{\texttt{john.stachurski@anu.edu.au}}}
\thanks{\textsuperscript{$\dagger$}School of Economics, University of Sydney. 
\href{mailto:jingni.yang@sydney.edu.au}{\texttt{jingni.yang@sydney.edu.au}}}
\thanks{\textsuperscript{$\ddagger$}Research School of Economics, 
Australian National University. 
\href{mailto:humphrey.yang@anu.edu.au}{\texttt{humphrey.yang@anu.edu.au}}}

\begin{abstract}
    In the theory of dynamic programming, an optimal policy is a policy whose
    lifetime value dominates that of all other policies from every possible
    initial condition in the state space. This raises a natural question: when
    does optimality from a single state imply optimality from every state? 
    Working in a general setting, we 
    provide sufficient conditions for this property that relate to reachability and
    irreducibility. Our results have significant implications for 
    modern policy-based algorithms used to solve large-scale dynamic programs.
    We illustrate our findings by applying them to an optimal savings problem
    via an algorithm that implements gradient ascent in a policy space
    constructed from neural networks.
\end{abstract}

\maketitle
\section{Introduction}

Dynamic programming (DP) is a major branch of optimization theory with a vast
range of applications. Within economics alone, applications extend from monetary
and fiscal policy to asset pricing, unemployment, firm investment, wealth
dynamics, commodity pricing, sovereign default, the division of labor, natural
resource extraction, human capital accumulation, retirement decisions, portfolio
choice, and dynamic pricing.  Dynamic programs that include uncertainty are
often called Markov decision processes (MDPs) and the theory of such processes
has been extensively developed (see, e.g., \cite{bauerle2011markov},
\cite{hernandez2012discrete}, \cite{bertsekas2012dynamic}, or
\cite{bertsekas2022abstract}).  

Much of the recent surge in interest in MDPs has been fueled by artificial
intelligence and reinforcement learning (see, e.g., \cite{bertsekas2021rollout}
or \cite{kochenderfer2022algorithms}). In recent years, researchers solving
large-scale MDPs have moved away from value-based methods and towards
policy-based methods, such as policy gradient ascent (see, e.g., \cite{Sutton1999PolicyGM}, 
\cite{Lan2023ImprovedCE}, \cite{Kumar2023PolicyGF}, \cite{murphy2024reinforcementlearningoverview}).  
These policy-based methods seek to maximize a real-valued objective, such as
\begin{equation}\label{eq:msig}
    m(\sigma) \coloneq \int v_\sigma(x) \rho(\diff x)
    \qquad (\sigma \in \Sigma).
\end{equation}

Here $\sigma$ is a policy for a given MDP, mapping some state space $\Xsf$ into
a specified action space, $v_\sigma(x)$ represents the lifetime value of
following the fixed policy $\sigma$, conditional on initial state $x$, and
$\rho$ is a given ``initial distribution.''  In practice, each $\sigma$ is
typically represented by a neural network. Policy-based methods often handle
large problems with continuous state and action spaces more efficiently than
traditional value-based methods such as value function iteration (VFI). The
framework has led to numerous successful algorithms for solving complex
decision-making problems, including Trust Region Policy
Optimization~\citep{Schulman2015TRPO}, Asynchronous Advantage
Actor-Critic~\citep{mnih2016asynchronous}, and Proximal Policy
Optimization~\citep{schulman2017proximal}. 

Despite these successes, there is one significant disadvantage of policy
gradient methods: unlike traditional DP algorithms, these methods are not
guaranteed to converge to an optimal policy, even in relatively straightforward
settings. To understand the issues at hand, recall
that an \emph{optimal policy} is a $\sigma \in \Sigma$ such that
$v_\sigma(x) = \max_{s \in \Sigma} v_s(x)$ for every $x \in \Xsf$; that is, a
policy $\sigma$ such that following this policy in every period leads to maximum
lifetime value from every initial state $x$. Even if one attains a global
maximum in \eqref{eq:msig}, with maximization over all $\sigma \in \Sigma$,
there is no guarantee that the resulting policy will be an optimal policy. 
Indeed, the result depends in general on the initial distribution $\rho$, which is not
one of the primitives of the DP problem.

Regarding this issue, the prevailing view is that relative optimality depends on
the initial distribution $\rho$ having large support.  For example, in a major
study of policy gradient methods, \cite{bhandari2024global} state that ``Policy
gradient methods have poor convergence properties if applied without an
exploratory initial distribution'' \citep[p.~1910]{bhandari2024global}. 

One disadvantage of choosing $\rho$ with large support is that computing the
integral \eqref{eq:msig} becomes computationally expensive, since the function
$v_\sigma(x)$ is expensive to evaluate.  Another is that, in general, this
integral cannot be computed exactly, and, moreover, the implications of a
relatively poor approximation to the integral for optimality theory are not
well-known.  A third issue is that existing conditions for ``sufficiently large
supports'' are difficult to test in practice, making it hard to understand
whether or not the relevant theory applies.  (We return to this last point
below.)

These observations motivate the following question: Does there exist a large and
important class of models for which it is permissible to choose $\rho$ with
\emph{small} support, while still maintaining the property that maximization of this
criterion over all $\sigma \in \Sigma$ yields a (globally) optimal policy?
At the extreme, when can $\rho$ be concentrated at a single point $x$, so that
the maximization criterion $m(\sigma)$ from \eqref{eq:msig} is just
$v_\sigma(x)$? In other words, when does optimality at a single state imply
optimality at every state? 

We show that, for standard MDPs on general state spaces, strong irreducibility of the
Markov dynamics generated by $\sigma$ is sufficient for this property.
Specifically, if a policy is optimal at a single state and has a strongly irreducible
transition kernel, then this optimality propagates throughout the entire state
space, making the policy globally optimal.  (Here ``strong irreducibility'' refers to
irreducibility of positive operators under the standard Riesz space definition.)
In addition, we show that weaker forms of irreducibility are also sufficient for this
result when $\sigma$ is also continuous.  

Since gradient policy methods are commonly applied to problems with continuous
state and action spaces, we avoid discreteness restrictions.  In particular, for
the MDPs that we consider, the state and action spaces can be arbitrary metric
spaces.  We also avoid placing restrictions on the class of MDPs under
consideration, adopting only mild regularity conditions that imply existence of solutions.
Focusing on problems where solutions exist is natural for this line of research, since
we wish to examine conditions under which local optima imply global optima.

Papers examining theoretical properties of policy gradient methods have some
connection to our work. For example, \cite{bhandari2024global} examine when
policy gradient ascent (actually decent) yields a globally optimal policy. To
step from local to global optimality, they restrict the classes of MDPs under
consideration and require that the initial distribution $\rho$ dominates the
discounted state occupancy measure under the optimal policy (i.e, the measure is
absolutely continuous with respect to $\rho$).   This condition is difficult to
verify in practice, since the optimal policy is \emph{a priori} unknown (and, in
fact, the object of the whole computational procedure).  Here we avoid both
large support restrictions on $\rho$ and explicit restrictions on the optimal
policy. (At the same time, \cite{bhandari2024global} supply many valuable new
results concerning the policy gradient algorithm, and we make no direct
contribution to this analysis.)

Other papers that examine theoretical properties of gradient policy methods
include \cite{khodadadian2021linear}, \cite{agarwal2021theory}, and
\cite{xiao2022convergence}. However, in these papers, the focus is on proving
the convergence of $\int v_\sigma \diff \rho$ to its maximal value, rather than
obtaining global convergence from local convergence (as we do here).  At the
same time, these papers provide valuable rates of convergence for specific
algorithms, which we do not discuss. A related line of research focuses on
average-optimal policies in finite-state MDPs by leveraging specific state space
structures under some policies. This includes exploring unichain, multichain,
communicating, and weakly communicating MDPs to study algorithmic convergence
\citep{Bartlett2009REGAL, puterman2014markov}. Our results focus on the
continuous state setting, where gradient policy methods are usually applied.

In an application connected to the income fluctuation problem, we examine policy
gradient methods under different irreducibility settings using neural network
representations of policy functions.  In this we follow a recent trend in
economics towards solving large scale dynamic problems using neural networks to
represent equilibrium objects (see, e.g., \cite{maliar2021deep} or
\cite{friedl2023deep}).  However, these papers do not examine the link between
local and global optimality. In fact they do not use the class of policy gradient methods 
described above, focusing instead on minimizing Euler or Bellman residuals.

The paper is structured as follows.  Section~\ref{s:mdps} provides background on
MDPs.  Section~\ref{s:lg} states our main results in a general setting.
Section~\ref{s:tcs} examines how the irreducibility condition from
Section~\ref{s:lg} can be weakened while still obtaining some transmission of
optimality across states. Section~\ref{s:os} illustrates our theoretical results
in the context of a benchmark optimal savings problem. Section~\ref{s:ext}
outlines avenues for future work.

\section{Markov Decision Processes}\label{s:mdps}

In this section, we review essential properties of Markov decision processes and
state a technical lemma that will be applied in our main results.

\subsection{Preliminaries}\label{ss:prel}

Let $\Xsf$ and $\Asf$ be metric spaces, let $\bB$ be the Borel subsets of
$\Xsf$, let $b\Xsf$  be the set of bounded Borel measurable functions from
$\Xsf$ to $\RR$, and let $cb\Xsf$ be the continuous functions in $b\Xsf$.  Both
$b\Xsf$ and $cb\Xsf$ are paired with the supremum norm $\| \cdot \|$ and the
pointwise partial order $\leq$.  For example, $f \leq g$ indicates that $f(x)
\leq g(x)$ for all $x \in \Xsf$. A map $M$ from $b\Xsf$ to itself is called
\navy{order preserving} if $f \leq g$ implies $Mf \leq Mg$.  
Absolute values are applied pointwise, so that
$|f|$ is the function $x \mapsto |f(x)|$.

A \navy{transition kernel} on $\Xsf$ is a function $Q$ from $\Xsf \times \bB$ to
$[0,1]$ such that $x \mapsto Q(x,B)$ is Borel measurable for all $B \in \bB$ and
$B \mapsto Q(x,B)$ is a Borel probability measure for all $x \in \Xsf$.  To any
such kernel $Q$ we associate a bounded linear operator on $b\Xsf$,
often referred to as its \navy{Markov operator} and also denoted by $Q$, via
\begin{equation}\label{eq:mo}
    f \mapsto Qf, \qquad (Qf)(x) = \int f(x') Q(x, \diff x').
\end{equation}
Typically, $(Qf)(x)$ represents the expectation of $f(X_{t+1})$
given that $X_t = x$ and $X_{t+1}$ is drawn from $Q(x, \diff x')$.

As usual, the positive cone of $b\Xsf$, denoted here by $b\Xsf_+$, is all
$v\in b\Xsf$ with $v \geq 0$.  Let $b\Xsf'$ be the dual space of
$b\Xsf$ and let $b\Xsf'_+$ to be the positive cone of $b\Xsf'$. 
The set $b\Xsf'_+$ contains, among other objects,
the set $\dD(\Xsf)$ of Borel probability measures on
$\Xsf$.  For simplicity, elements of $\dD(\Xsf)$ are referred to as
\navy{distributions}.  For $\rho \in \dD(\Xsf)$ and $f \in b\Xsf$ we set
\begin{equation*}
    \inner{f, \rho} \coloneq \int f \diff \rho.
\end{equation*}

For each $x \in \Xsf$, the \navy{point evaluation functional} generated by $x$
is the map $\delta_x$ that sends $w \in b\Xsf$ into $w(x) \in \RR$.  Below
it will be convenient for us to write this in dual notation, so that
$\inner{w,\delta_x} = w(x)$ for every $w \in b\Xsf$.  We will make use of the
following lemma.

\begin{lemma}\label{l:pe}
    Every point evaluation functional on $b\Xsf$ is a nonzero element of 
    $b\Xsf_+'$.
\end{lemma}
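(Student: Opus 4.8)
The plan is to verify directly the three properties packaged into the statement: that $\delta_x$ is a bounded linear functional (hence an element of $b\Xsf'$), that it is positive (hence lies in the cone $b\Xsf_+'$), and that it is not the zero functional. Since each of these is a matter of unwinding definitions, the proof will be short, and the only thing requiring attention is correctly interpreting what membership in the dual cone $b\Xsf_+'$ means.

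First I would check linearity and boundedness. Linearity is immediate from the defining relation $\inner{w, \delta_x} = w(x)$, since for $w, w' \in b\Xsf$ and scalars $\alpha, \beta$ we have $\inner{\alpha w + \beta w', \delta_x} = (\alpha w + \beta w')(x) = \alpha w(x) + \beta w'(x) = \alpha \inner{w, \delta_x} + \beta \inner{w', \delta_x}$. For boundedness, the estimate $|\inner{w, \delta_x}| = |w(x)| \leq \|w\|$ holds for every $w \in b\Xsf$, so $\delta_x$ is continuous with operator norm at most one. This places $\delta_x$ in $b\Xsf'$.

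Next I would establish positivity and nonzeroness. By definition, a functional belongs to the positive cone $b\Xsf_+'$ precisely when it maps $b\Xsf_+$ into $[0,\infty)$, so it suffices to observe that if $w \geq 0$ then in particular $w(x) \geq 0$, whence $\inner{w, \delta_x} = w(x) \geq 0$; thus $\delta_x \in b\Xsf_+'$. For nonzeroness I would evaluate at the constant function $\1$: since $\inner{\1, \delta_x} = 1 \neq 0$, the functional $\delta_x$ is not identically zero (and in fact $\|\delta_x\| = 1$, attained at $\1$). There is no genuine obstacle in this argument; the only subtlety is that positivity must be checked against nonnegative functions via the pointwise order on $b\Xsf$, rather than against some other candidate condition, and once that definition is fixed the verification is routine.
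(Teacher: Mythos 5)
Your proof is correct and follows essentially the same route as the paper's: verify linearity, continuity, positivity, and nonzeroness directly from the definition $\inner{w,\delta_x}=w(x)$, using the constant function $\1$ for the last step. The only cosmetic difference is that you establish continuity via the norm bound $|w(x)|\leq\|w\|$ whereas the paper argues by sequential convergence; these are equivalent for a linear functional.
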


\begin{proof}
    Fix $x \in \Xsf$. Linearity of $\delta_x$ is obvious: given $a, b \in \RR$
    and $v, w \in b\Xsf$, we have
    \begin{equation*}
        \inner{a v + b w, \delta_x}
        = (a v+ b w) (x)
        = a v(x) + b w(x) 
        = a \inner{v, \delta_x} + b \inner{w, \delta_x}.
    \end{equation*}
    Regarding continuity, if $w_n \to w$ in $b\Xsf$, then $w_n \to w$ pointwise
    on $\Xsf$, so $\inner{w_n,\delta_x} = w_n(x) \to w(x) = \inner{w,\delta_x}$. 
    Regarding positivity, it suffices to show  that $\inner{w,\delta_x} \geq 0$ 
    whenever $w \geq 0$.  This clearly holds, since $w \geq 0$ implies 
    $w(x) = \inner{w,\delta_x} \geq 0$. Finally, $\delta_x$ is not the 
    zero element of $b\Xsf'$ because 
    $w=\mathbf{1}$ is in $b\Xsf$ and $\inner{w,\delta_x} = w(x) = 1 \not= 0$.
\end{proof}

\subsection{Markov Decision Process} \label{ss:mdp}

Let $\Xsf$ and $\Asf$ be metric spaces, as in Section~\ref{ss:prel}.
A \navy{Markov decision process} (MDP) with state space $\Xsf$ and action space
$\Asf$ is a tuple $(r, \Gamma, \beta, P)$, where $r$ is a reward function,
$x \mapsto \Gamma(x) \subset \Asf$ is a feasible correspondence, $\beta \in \RR$ is a 
discount factor and $P(x, a, \diff x')$ is a distribution over next 
period states given current state $x$ and action $a$.  Let $\Gsf \coloneq
\setntn{(x, a) \in \Xsf \times \Asf}{a \in \Gamma(x)}$. We consider a relatively
standard environment, as considered in, say, \cite{bauerle2011markov} and
\cite{hernandez2012discrete}, where
\begin{enumerate}
	\item[\namedlabel{itm:a}{(a)}] $\beta \in (0, 1)$,
	\item[\namedlabel{itm:b}{(b)}] $\Gamma$ is nonempty, continuous, and
        compact-valued on $\Xsf$,
	\item[\namedlabel{itm:c}{(c)}] $r$ is bounded and continuous on $\Gsf$, and
	\item[\namedlabel{itm:d}{(d)}] the map $(x, a)
	\mapsto \int v(x') P(x, a, dx')$ is continuous on $\Gsf$ whenever $v \in
	cb\Xsf$.
\end{enumerate}
(The case of unbounded $r$ is discussed in Section~\ref{s:ext}.)

Let $\Sigma$ denote the set of feasible policies, by which we mean all Borel
measurable functions $\sigma$ mapping $\Xsf$ to $\Asf$ with $\sigma(x) \in
\Gamma(x)$ for all $x \in \Xsf$.  For each $\sigma \in \Sigma$ and $x \in \Xsf$, 
we set 
\begin{equation*}
	r_\sigma(x) \coloneq r(x, \sigma(x))
    \quad \text{and} \quad
	P_\sigma(x, \diff x') \coloneq P(x, \sigma(x), \diff x').
\end{equation*}
Thus, $r_\sigma(x)$ is rewards at $x$ under policy $\sigma$ and $P_\sigma$ is
the transition kernel on $\Xsf$ generated by $\sigma$. Using the corresponding
Markov operator $P_\sigma$, as defined in \eqref{eq:mo},  the \navy{lifetime
value} of a policy $\sigma$, denoted by $v_\sigma$, can be expressed as 
\begin{equation}\label{eq:lv}
    v_\sigma 
    = \sum_{t=0}^\infty (\beta P_\sigma)^t r_\sigma
\end{equation}
(see, e.g., \cite{puterman2014markov}, Theorem~6.1.1).
We will use the fact that $v_\sigma$ is also the unique fixed point
in $b\Xsf$ of the \navy{policy operator} $T_\sigma$ defined by $T_\sigma \, 
v = r_\sigma + \beta P_\sigma \, v$.  This operator can be written more explicitly as
\begin{equation*}
	(T_\sigma \, v)(x) = r(x, \sigma(x))
	+ \beta \int v(x') P(x, \sigma(x), \diff x')
	\qquad (v \in b\Xsf, \; x \in \Xsf).
\end{equation*}
(The lifetime value $v_\sigma$ is the unique fixed point of $T_\sigma$ because
the spectral radius of the linear operator $\beta P_\sigma$ is $\beta$, so, by the
geometric series lemma, $\beta < 1$ implies that $v = r_\sigma + \beta P_\sigma \, v$ has the unique solution given by the
right-hand side of \eqref{eq:lv}.)  Iterating on the definition $T_\sigma \, v = r_\sigma + \beta P_\sigma \, v$, we
find that 
\begin{equation}\label{eq:tsw}
    T_\sigma^n v 
    = r_\sigma + \beta P_\sigma r_\sigma
    + \cdots + (\beta P_\sigma)^{n-1} r_\sigma
    + (\beta P_\sigma)^n v
    \quad \text{for all $n \in \NN$}.  
\end{equation}
This expression will be useful in the theory below.

The \navy{value function} is denoted $v^*$ and defined at each $x \in \Xsf$ by
$v^*(x) \coloneq \sup_{\sigma \in \Sigma} v_\sigma(x)$. 
A policy $\sigma$ is called \navy{optimal} if $v_\sigma(x) = v^*(x)$ 
for all $x \in \Xsf$.

We define the Bellman operator by 
\begin{equation}\label{eq:bop}
	(Tv)(x) = 
	\max_{a \in\Gamma(x)}
	\left\{
	r(x, a) + \beta \int v(x')P(x, a, \diff x')
	\right\}
	\qquad (v \in b\Xsf, \; x \in \Xsf).
\end{equation}

We will use the following facts:

\begin{proposition}\label{p:ff}
    Under the stated assumptions,
    \begin{enumerate}
        \item the value function $v^*$ is the unique fixed point of the 
        Bellman operator in $b\Xsf$,
        \item the value function $v^*$ is well-defined and contained in
            $cb\Xsf$, and  
        \item at least one optimal policy exists.
    \end{enumerate}
\end{proposition}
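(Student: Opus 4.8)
The plan is to reduce all three claims to an analysis of the Bellman operator $T$ from \eqref{eq:bop}, which I will show is a contraction of modulus $\beta$ on the complete space $cb\Xsf$. First I would verify that $T$ maps $cb\Xsf$ into itself. Fix $v \in cb\Xsf$ and consider the objective $g(x,a) \coloneq r(x,a) + \beta \int v(x') P(x, a, \diff x')$ on $\Gsf$. By assumption~\ref{itm:c} the first term is continuous, and by~\ref{itm:d} the second is continuous, so $g$ is continuous on $\Gsf$. Since $\Gamma$ is continuous and compact-valued by~\ref{itm:b}, the theorem of the maximum (Berge) tells us that $x \mapsto (Tv)(x) = \max_{a \in \Gamma(x)} g(x,a)$ is continuous, that the maximum is attained, and that the argmax correspondence $x \mapsto \argmax_{a \in \Gamma(x)} g(x,a)$ is nonempty, compact-valued, and upper hemicontinuous. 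Boundedness of $Tv$ follows from boundedness of $r$ and $\|v\| < \infty$, so $Tv \in cb\Xsf$.

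Next I would establish the contraction estimate. For $u, v \in cb\Xsf$ and $x \in \Xsf$, using the elementary bound $\max_a f - \max_a h \leq \max_a (f - h)$ together with the fact that each $P(x, a, \cdot)$ is a probability measure, one obtains
\begin{equation*}
  (Tu)(x) - (Tv)(x) \leq \beta \max_{a \in \Gamma(x)} \int (u - v)(x') P(x, a, \diff x') \leq \beta \|u - v\|.
\end{equation*}
Interchanging $u$ and $v$ gives $\|Tu - Tv\| \leq \beta \|u - v\|$. Because $cb\Xsf$ is a closed subspace of the Banach space $(b\Xsf, \|\cdot\|)$, it is complete, so Banach's fixed point theorem supplies a unique $\hat v \in cb\Xsf$ with $T\hat v = \hat v$. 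For uniqueness within the larger space $b\Xsf$ (claim~1) I would argue directly: if $w \in b\Xsf$ satisfies $Tw = w$, then the same estimate gives $|w(x) - \hat v(x)| = |(Tw)(x) - (T\hat v)(x)| \leq \beta \|w - \hat v\|$ for every $x$, whence $\|w - \hat v\| \leq \beta \|w - \hat v\|$ and therefore $w = \hat v$.

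To identify $\hat v$ with the value function and produce an optimal policy simultaneously, I would invoke a measurable selection theorem. Since the argmax correspondence is nonempty, compact-valued, and upper hemicontinuous, hence Borel measurable with closed values, the Kuratowski--Ryll-Nardzewski selection theorem yields a measurable $\sigma^* \in \Sigma$ with $\sigma^*(x) \in \argmax_{a \in \Gamma(x)} \{r(x,a) + \beta \int \hat v(x') P(x, a, \diff x')\}$ for all $x$. By construction $T_{\sigma^*} \hat v = T\hat v = \hat v$, so $\hat v$ is the fixed point of the policy operator $T_{\sigma^*}$, giving $v_{\sigma^*} = \hat v$. It then remains to check the principle of optimality. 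For any $\sigma \in \Sigma$ the definition of the maximum gives $T\hat v \geq T_\sigma \hat v$, i.e., $\hat v \geq T_\sigma \hat v$; since $T_\sigma$ is order preserving with fixed point $v_\sigma$, iterating and using the convergence $T_\sigma^n \hat v \to v_\sigma$ from \eqref{eq:tsw} yields $\hat v \geq v_\sigma$. Taking the supremum over $\sigma$ gives $\hat v \geq v^*$, while $v_{\sigma^*} = \hat v$ gives $\hat v \leq v^*$. Hence $\hat v = v^* = v_{\sigma^*}$, which shows that $v^*$ is well-defined and lies in $cb\Xsf$ (claim~2), that it is the unique fixed point of $T$ in $b\Xsf$ (claim~1), and that $\sigma^*$ is optimal (claim~3).

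I expect the main obstacle to be the two continuity and measurability inputs that make the reward-plus-continuation objective well behaved: verifying the hypotheses of the theorem of the maximum so that $Tv$ is genuinely continuous and the argmax is upper hemicontinuous, and then extracting a Borel measurable maximizer $\sigma^*$. The contraction and fixed-point bookkeeping is routine once $\beta < 1$ is in hand, but the selection step is where assumptions~\ref{itm:b}--\ref{itm:d} must be used with care, since without compact values and joint continuity of $g$ neither attainment of the maximum nor measurability of $\sigma^*$ is guaranteed.
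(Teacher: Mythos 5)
Your proposal is correct, but it is worth noting that the paper itself does not prove Proposition~\ref{p:ff} at all: its ``proof'' is a citation to \cite{bauerle2011markov} and \cite{hernandez2012discrete}. What you have written is essentially the standard argument that those references carry out --- Berge's theorem of the maximum to show $T$ preserves $cb\Xsf$, the $\beta$-contraction estimate on the complete space $cb\Xsf$, a measurable selection from the argmax correspondence, and the principle-of-optimality sandwich $v_{\sigma^*} = \hat v \geq v_\sigma$ for all $\sigma$. The logic is sound and each step uses the assumptions \ref{itm:a}--\ref{itm:d} where you say it does. Two small points deserve care if you were to write this out in full. First, your uniqueness-in-$b\Xsf$ argument implicitly assumes $T$ is well defined on all of $b\Xsf$ (for a discontinuous $w$ the map $x \mapsto \sup_{a \in \Gamma(x)} g(x,a)$ need not obviously be Borel measurable, and the maximum need not be attained); this does not damage your argument, since the contraction estimate applies to any $w \in b\Xsf$ for which $Tw = w$ makes sense, but it is the kind of measurability issue the cited texts handle by restricting attention to suitable function classes. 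Second, the Kuratowski--Ryll-Nardzewski theorem as usually stated requires the target space to be Polish, whereas the paper only assumes $\Asf$ is a metric space; for a compact-valued upper hemicontinuous correspondence a Borel selector still exists, but you should either invoke the appropriate selection theorem for that setting or note that the standing assumptions in the cited references are slightly stronger. Neither point is a gap in the mathematics so much as a place where your sketch leans on results whose exact hypotheses need to be matched to the paper's (rather minimal) topological assumptions.
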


\begin{proof}
    See \cite{bauerle2011markov} or \cite{hernandez2012discrete}.
\end{proof}

We also make use of the following technical lemma, which shows one implication
of local optimality at some given $x \in \Xsf$.

\begin{lemma}\label{l:e1}
	If $\sigma \in \Sigma$ and $v_\sigma(x) = v^*(x)$, then
	\begin{equation}\label{eq:pn}
	 	\sum_{n \in \NN}
        \int (v^*(x') - v_\sigma(x')) P_\sigma^n(x, \diff x') = 0.
	\end{equation}
\end{lemma}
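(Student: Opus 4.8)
The plan is to reduce the claimed identity to a pointwise statement about the nonnegative function $h \coloneq v^* - v_\sigma$ and then exploit a Bellman-type inequality together with order preservation of $P_\sigma$. First I would record that $h \geq 0$: by definition $v^*(y) = \sup_{s \in \Sigma} v_s(y) \geq v_\sigma(y)$ for every $y \in \Xsf$. Since each $P_\sigma^n(x, \cdot)$ is a probability measure and $h \geq 0$, every summand $\int h(x') P_\sigma^n(x, \diff x') = (P_\sigma^n h)(x)$ is nonnegative. Hence the sum in \eqref{eq:pn} is a sum of nonnegative reals, and it vanishes if and only if $(P_\sigma^n h)(x) = 0$ for every $n$. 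That is the target I would aim for; note in particular that no summability or convergence control is needed beyond showing each term is individually zero.

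The crux is the inequality $h \geq \beta P_\sigma h$. To obtain it, observe that $\sigma(y) \in \Gamma(y)$ makes $(T_\sigma v^*)(y)$ one of the candidates in the maximum defining $(T v^*)(y)$, so $T_\sigma v^* \leq T v^* = v^*$, where the last equality is Proposition~\ref{p:ff}. Subtracting the fixed-point identity $v_\sigma = T_\sigma v_\sigma$ and using $T_\sigma w = r_\sigma + \beta P_\sigma w$, the reward terms $r_\sigma$ cancel and I obtain
\begin{equation*}
    h = v^* - v_\sigma \geq T_\sigma v^* - T_\sigma v_\sigma = \beta P_\sigma(v^* - v_\sigma) = \beta P_\sigma h.
\end{equation*}
Because $P_\sigma$ is order preserving and $\beta > 0$, the operator $\beta P_\sigma$ is order preserving, so applying it repeatedly and chaining the inequalities gives $h \geq \beta P_\sigma h \geq \beta^2 P_\sigma^2 h \geq \cdots$, that is, $h \geq \beta^n P_\sigma^n h$ for every $n$ by an easy induction (using $(\beta P_\sigma)^n = \beta^n P_\sigma^n$).

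Finally I would evaluate this chain at the distinguished point $x$. The hypothesis $v_\sigma(x) = v^*(x)$ says precisely $h(x) = 0$, so $0 = h(x) \geq \beta^n (P_\sigma^n h)(x) \geq 0$, which forces $(P_\sigma^n h)(x) = 0$ for each $n$. Summing over $n \in \NN$ then yields \eqref{eq:pn}.

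The only genuine obstacle is locating the right inequality $h \geq \beta P_\sigma h$ and propagating it through $P_\sigma$; everything else is bookkeeping. Once that inequality and its iterate $h \geq \beta^n P_\sigma^n h$ are in hand, the nonnegativity of $h$ together with the single scalar equation $h(x) = 0$ does all the remaining work, collapsing each term of the series to zero without any further estimate.
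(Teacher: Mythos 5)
Your proof is correct and is essentially the paper's argument: the iterated inequality $h \geq \beta^n P_\sigma^n h$ is exactly what the paper obtains by combining the $n$-step expansion \eqref{eq:tsw} with the chain $v_\sigma = T_\sigma^n v_\sigma \leq T_\sigma^n v^* \leq T^n v^* = v^*$, and then evaluating at the point $x$ where $h(x)=0$. The only difference is presentational --- you derive the one-step inequality $h \geq \beta P_\sigma h$ and propagate it by order preservation, whereas the paper applies the $n$-step formula directly --- so no further comparison is needed.
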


\begin{proof}
    Fix $n \in \NN$.  Applying the expression for $T_\sigma^n v$ from
    \eqref{eq:tsw} twice, first with $v = v_\sigma$ and then with $v = v^*$, we
    get
    \begin{equation}\label{eq:tsg}
        T_\sigma^n \, v_\sigma -  T_\sigma^n \, v^* 
        =  \beta^n (P_\sigma^n \, v_\sigma -   P_\sigma^n \, v^* ).
    \end{equation}
	In addition, we have
    \begin{equation}\label{eq:vsvs}
		v_\sigma 
		= T_\sigma^n \, v_\sigma
		\leq T_\sigma^n \, v^*
		\leq T^n \, v^*
		= v^*. 
	\end{equation}
    In \eqref{eq:vsvs}, the first inequality is due to the fact $T_{\sigma}$ is order preserving 
	and $v_\sigma \leq v^* $, while the second follows from the
    fact that $T_\sigma \, v \leq Tv$ for all $v \in b\Xsf$. (Since $ T_\sigma\, 
    v \leq T\, v$ for all $v$, $ T_\sigma^2\, v^* \leq T_\sigma T\, v^*\leq T^2\, v^*$
    and so $ T_\sigma^2\, v^*\leq T^2\, v^*.$ By induction, the second 
    inequality holds.)  The claim in Lemma~\ref{l:e1} follows from \eqref{eq:tsg} and
    \eqref{eq:vsvs}.  To see this, fix $x \in \Xsf$ with $v_\sigma(x) = v^*(x)$.   From this equality and 
    \eqref{eq:vsvs} we get $(T_\sigma^n \, v_\sigma) (x) =(T_\sigma^n \, v^*)(x)$.
    Since $\beta > 0$, combining this result with \eqref{eq:tsg} yields
    $(P_\sigma^n \, v_\sigma) (x) =(P_\sigma^n \, v^*)(x)$.
    Hence \eqref{eq:pn} holds.
\end{proof}

\section{From Local to Global Optimality}\label{s:lg}

The standard definition of optimality, which was given in Section~\ref{ss:mdp},
is global in nature, since it concerns the lifetime value of the policy at every
$x \in \Xsf$. We seek conditions under which local optimality implies global
optimality.  In particular, we seek conditions under which 
the following three statements are equivalent:
\begin{enumerate}
    \item[\namedlabel{itm:E1}{(E1)}] there exists an $x \in \Xsf$ such that 
    $v_s(x)\leq v_\sigma(x)$ for all $s \in \Sigma$, 
    \item[\namedlabel{itm:E2}{(E2)}] there exists a $\rho \in \dD(\Xsf)$ such that
        $\inner{v_s, \rho}\leq\inner{ v_\sigma, \rho}$ for all 
        $s \in \Sigma$, 
    \item[\namedlabel{itm:E3}{(E3)}] $\sigma$ is an optimal policy.
\end{enumerate}

\subsection{Preliminary Results}

We first note that, in the MDP set up we have described, \ref{itm:E1} and \ref{itm:E2} are always
equivalent, as the next lemma shows.

\begin{lemma}\label{l:e1e2}
    For all $\sigma \in \Sigma$,  the statements {\rm \ref{itm:E1}}
    and {\rm \ref{itm:E2}} are equivalent.
\end{lemma}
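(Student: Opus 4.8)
The plan is to prove the two implications separately. The implication \ref{itm:E1} $\Rightarrow$ \ref{itm:E2} is immediate once one evaluates against a point mass, whereas the reverse implication \ref{itm:E2} $\Rightarrow$ \ref{itm:E1} is where the substance lies; it rests on the existence of an optimal policy (Proposition~\ref{p:ff}) together with an elementary squeeze argument and a measure-theoretic positivity step. Notably, no irreducibility or continuity hypothesis is needed here, which is consistent with the claim holding for every $\sigma \in \Sigma$.

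For \ref{itm:E1} $\Rightarrow$ \ref{itm:E2}, suppose $x \in \Xsf$ satisfies $v_s(x) \leq v_\sigma(x)$ for all $s \in \Sigma$. I would simply take $\rho$ to be the Dirac distribution at $x$, which is a Borel probability measure and hence an element of $\dD(\Xsf)$, and which satisfies $\inner{f, \rho} = f(x)$ for every $f \in b\Xsf$. Then $\inner{v_s, \rho} = v_s(x) \leq v_\sigma(x) = \inner{v_\sigma, \rho}$ for all $s \in \Sigma$, so \ref{itm:E2} holds. This step is purely definitional.

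For \ref{itm:E2} $\Rightarrow$ \ref{itm:E1}, suppose $\rho \in \dD(\Xsf)$ satisfies $\inner{v_s, \rho} \leq \inner{v_\sigma, \rho}$ for all $s \in \Sigma$. By Proposition~\ref{p:ff} there exists an optimal policy $\sigma^* \in \Sigma$, so that $v_{\sigma^*} = v^*$. Applying the hypothesis with $s = \sigma^*$ gives $\inner{v^*, \rho} = \inner{v_{\sigma^*}, \rho} \leq \inner{v_\sigma, \rho}$. On the other hand, $v_\sigma \leq v^*$ pointwise by the definition of $v^*$, and positivity of $\rho$ yields $\inner{v_\sigma, \rho} \leq \inner{v^*, \rho}$. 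Combining the two bounds and using linearity gives
\begin{equation*}
    \inner{v^* - v_\sigma, \rho} = \int (v^*(x) - v_\sigma(x)) \, \rho(\diff x) = 0.
\end{equation*}
Since the integrand $v^* - v_\sigma$ is nonnegative and $\rho$ is a probability measure, the set $\{x \in \Xsf : v_\sigma(x) = v^*(x)\}$ carries full $\rho$-measure and is therefore nonempty. Choosing any $x$ in this set, we obtain $v_s(x) \leq v^*(x) = v_\sigma(x)$ for all $s \in \Sigma$, which is exactly \ref{itm:E1}.

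The only genuinely nontrivial inputs, and hence the main obstacle, are confined to the reverse direction: first, the appeal to existence of an optimal policy, which is what lets me replace the supremum $v^*$ inside the functional by the value of an actual element of $\Sigma$ and so activate the hypothesis; and second, the positivity argument that a nonnegative function integrating to zero against a probability measure must vanish on a set of full (hence nonempty) measure. Everything else is bookkeeping, so I expect the write-up to be short.
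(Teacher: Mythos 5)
Your proof is correct and follows essentially the same route as the paper's: the forward direction uses the Dirac/point-evaluation measure, and the reverse direction uses existence of an optimal policy (Proposition~\ref{p:ff}) plus the pointwise bound $v_\sigma \leq v^*$ to force $\inner{v^* - v_\sigma, \rho} = 0$. The only cosmetic difference is that you argue directly (a nonnegative function with zero integral vanishes $\rho$-a.e., hence at some point), whereas the paper phrases the same positivity step as a proof by contradiction.
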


\begin{proof}
	To show \ref{itm:E1} implies \ref{itm:E2}, assume \ref{itm:E1} 
	and fix $x \in \Xsf$  with $ v_\sigma(x)\geq v_s(x)$ for all $s\in\Sigma$.
    Let $\delta_x$ be the point evaluation functional generated by $x$.
    Since $\delta_x\in \dD(\Xsf)$ and
	$\inner{v_{\sigma},\delta_x} = v_{\sigma}(x) \geq v_s(x) = \inner{v_s,\delta_x}$
	for all $s\in \Sigma$, so \ref{itm:E2} holds. To show 
	\ref{itm:E2} implies \ref{itm:E1}, fix $\rho \in \dD(\Xsf)$ with 
	$\inner{ v_\sigma, \rho} \geq \inner{v_s, \rho} $ for all $s \in \Sigma$.
    Since at least one $s \in \Sigma$ is optimal (Proposition~\ref{p:ff}),
    this yields $\inner{ v_\sigma, \rho} \geq \inner{v^*, \rho}$.
    Now suppose to the contrary that for each $x\in\Xsf,$ we can find a $\tau \in \Sigma$
	such that $v_{\tau}(x) > v_{\sigma}(x)$. Since $v^*(x) \geq  v_s(x)$ for all 
	$s \in \Sigma$ and $x\in \Xsf$,  we have $v^*(x) > v_{\sigma}(x) $ for all 
	$x \in \Xsf$. Hence $\inner{ v^*, \rho} > \inner{v_{\sigma}, \rho}$. 
	This contradiction proves \ref{itm:E1}.
\end{proof}

Our second preliminary observation suggests one way to obtain \ref{itm:E3} from \ref{itm:E2}.

\begin{lemma}\label{l:rae}
    If $\inner{v_\sigma, \rho} = \inner{ v^*, \rho}$, then $v_\sigma = v^*$
    holds $\rho$-almost everywhere.
\end{lemma}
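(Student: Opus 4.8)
The plan is to exploit the general fact that $v_\sigma \leq v^*$ everywhere, so that the integrand $v^* - v_\sigma$ is a nonnegative function, and then apply the elementary measure-theoretic principle that a nonnegative measurable function with zero integral must vanish almost everywhere. First I would establish that $w \coloneq v^* - v_\sigma$ lies in $b\Xsf_+$: by definition of the value function $v^*(x) = \sup_{s \in \Sigma} v_s(x) \geq v_\sigma(x)$ for every $x \in \Xsf$, so $w \geq 0$; boundedness and measurability follow since both $v^*$ and $v_\sigma$ lie in $b\Xsf$ (indeed $v^* \in cb\Xsf$ by Proposition~\ref{p:ff}, and $v_\sigma \in b\Xsf$ by the fixed-point characterization).

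Next I would rewrite the hypothesis in terms of $w$. The assumption $\inner{v_\sigma, \rho} = \inner{v^*, \rho}$ together with linearity of the pairing gives
\begin{equation*}
    \inner{w, \rho} = \inner{v^* - v_\sigma, \rho} = \inner{v^*, \rho} - \inner{v_\sigma, \rho} = 0,
\end{equation*}
that is, $\int w \diff \rho = 0$. Since $w \geq 0$ and $\rho \in \dD(\Xsf)$ is a genuine probability measure, this is exactly the situation covered by the standard result: the integral of a nonnegative measurable function against a measure is zero if and only if the function is zero almost everywhere with respect to that measure. I would invoke this directly to conclude $w = 0$ holds $\rho$-almost everywhere, i.e. $v^* - v_\sigma = 0$, and hence $v_\sigma = v^*$ holds $\rho$-a.e.

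The argument is essentially routine, so there is no substantial obstacle; the only point deserving care is the justification that $w \geq 0$ pointwise, which rests squarely on the definition of $v^*$ as a pointwise supremum over $\Sigma$ and therefore requires no optimality or irreducibility hypotheses. One should note what the conclusion does \emph{not} give: equality only $\rho$-almost everywhere, not everywhere, which is precisely why the subsequent irreducibility conditions are needed to upgrade \ref{itm:E2} to the global statement \ref{itm:E3}. I would phrase the proof so that this limitation is transparent, since it motivates the remainder of the section.
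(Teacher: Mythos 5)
Your proposal is correct and follows essentially the same route as the paper: both arguments rest on the pointwise inequality $v^* \geq v_\sigma$ (from the definition of $v^*$ as a supremum) plus the standard fact that a nonnegative measurable function with zero integral against $\rho$ vanishes $\rho$-almost everywhere. The paper merely packages this as a short contradiction argument rather than a direct invocation, so there is no substantive difference.
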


\begin{proof}
    By definition, $v^* \geq v_\sigma$ on $\Xsf$.  If, in addition, $v^* > v_\sigma$ on a set $E$ of positive $\rho$-measure, then
    \begin{equation*}
        \int (v^* - v_\sigma) \diff \rho
        \geq \int_E (v^* - v_\sigma) \diff \rho > 0.
    \end{equation*}
    This contradicts $\inner{v_\sigma, \rho} = \inner{ v^*, \rho}$.
    Hence $v_\sigma = v^*$ holds $\rho$-almost everywhere.
\end{proof}

Loosely speaking, this lemma shows that maximizing the scalar performance
measure $\inner{v_\sigma, \rho}$ with a highly exploratory initial distribution
$\rho$ promotes global optimality.  

The disadvantage of this approach is that evaluating the scalar measure from a
such an initial distribution is costly.  For this reason, our main
interest is in providing conditions under which \ref{itm:E1} implies \ref{itm:E3}.  Our
conditions relate to irreducibility.  The first condition, discussed in
Section~\ref{ss:si}, uses a notion of irreducibility from the literature on
Banach lattices  (see, e.g., \cite{zaanen2012introduction}), while the second,
discussed in Section~\ref{ss:wi}, uses an irreducibility concept from the Markov
process literature.

\subsection{Strong Irreducibility}\label{ss:si}

Recall that a linear subspace
$I$ of $b\Xsf$ is called an \navy{ideal} in $b\Xsf$ when $f \in I$ and $|g| \leq
|f|$ implies $g \in I$.  An ideal $I$ is said to be \navy{invariant} for a
linear operator $K$ if $K I \subset I$. A linear operator $K$ from $b\Xsf$ to
itself is called \navy{positive} when $Kf \geq 0$ for all $f \geq 0$. A positive
linear operator $K$ is called \navy{irreducible} if the only invariant ideals
under $K$ are the trivial subspace $\{0\}$ and the whole space $b\Xsf$.  
By Proposition 8.3 (c) of~\cite{schaefer1974banach}, such a $K$ is irreducible
if and only if, for each nonzero $f \in b\Xsf_+$ and each nonzero $\mu\in
b\Xsf'_+$, there exists an $m \in \NN$ with $\inner{\mu, K^m f} > 0$.

In what follows, we call a transition kernel $P$ on $\Xsf$ 
\navy{strongly irreducible} if its Markov operator (see
\eqref{eq:mo}) is irreducible on $b\Xsf$ in the sense just defined.
Here is our main result for the strongly irreducible case.
In the statement, $\sigma$ is any feasible policy.

\begin{theorem}\label{t:strong_bk}
    If $P_\sigma$ is strongly irreducible,
    then {\rm \ref{itm:E1}--\ref{itm:E3}} are equivalent.
\end{theorem}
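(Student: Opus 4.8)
The plan is to lean on Lemma~\ref{l:e1e2}, which already gives the equivalence of \ref{itm:E1} and \ref{itm:E2} for every $\sigma \in \Sigma$ without any irreducibility hypothesis. It therefore suffices to establish the equivalence of \ref{itm:E1} and \ref{itm:E3}. One direction is immediate: if \ref{itm:E3} holds then $v_\sigma = v^*$ on all of $\Xsf$, so $v_\sigma(x) = v^*(x) \geq v_s(x)$ for every $s \in \Sigma$ and every $x \in \Xsf$, and \ref{itm:E1} holds (at any point). The substance of the theorem is the reverse implication \ref{itm:E1} $\Rightarrow$ \ref{itm:E3}, and this is the only place where strong irreducibility of $P_\sigma$ is used.

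To prove it, first I would fix the state $x \in \Xsf$ furnished by \ref{itm:E1}. Since $v^* \geq v_\sigma$ everywhere, while \ref{itm:E1} together with the existence of an optimal policy (Proposition~\ref{p:ff}) gives $v_\sigma(x) \geq v^*(x)$, we obtain $v_\sigma(x) = v^*(x)$. This is exactly the hypothesis of Lemma~\ref{l:e1}, so I may invoke \eqref{eq:pn}. Writing $f = v^* - v_\sigma$, note that $f$ belongs to the positive cone $b\Xsf_+$: it is bounded and Borel measurable because $v^* \in cb\Xsf$ (Proposition~\ref{p:ff}) and $v_\sigma \in b\Xsf$, and it is nonnegative by the definition of $v^*$. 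In this notation \eqref{eq:pn} reads $\sum_{n \in \NN} \inner{\delta_x, P_\sigma^n f} = 0$.

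The conceptual heart of the argument---and the step I expect to carry the weight---is to observe that every term in this sum is nonnegative and then argue by contradiction. Positivity of the Markov operator $P_\sigma$ together with $f \geq 0$ gives $P_\sigma^n f \geq 0$, and $\delta_x \in b\Xsf_+'$ by Lemma~\ref{l:pe}, so $\inner{\delta_x, P_\sigma^n f} \geq 0$ for each $n$. A nonnegative series that sums to zero must vanish term by term, whence $\inner{\delta_x, P_\sigma^n f} = 0$ for all $n \in \NN$. Now suppose, for contradiction, that $\sigma$ is not optimal. Then $v_\sigma \neq v^*$, so $f$ is a \emph{nonzero} element of $b\Xsf_+$, while $\delta_x$ is a nonzero element of $b\Xsf_+'$ (Lemma~\ref{l:pe}). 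The characterization of irreducibility from Proposition~8.3(c) of~\cite{schaefer1974banach}, quoted just before the theorem, then yields some $m \in \NN$ with $\inner{\delta_x, P_\sigma^m f} > 0$, contradicting the vanishing just established. Hence $f = 0$, that is $v_\sigma = v^*$, which is \ref{itm:E3}. The one detail to check is that the index set over which \eqref{eq:pn} delivers vanishing coincides with the range of $m$ in the irreducibility criterion; under a single convention for $\NN$ these match automatically, and the argument reduces to a clean pairing of Lemma~\ref{l:e1} with the Riesz-space definition of irreducibility, with the positive functional $\delta_x$ serving as the bridge.
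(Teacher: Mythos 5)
Your proposal is correct and follows essentially the same route as the paper's proof: reduce to \ref{itm:E1}$\Rightarrow$\ref{itm:E3} via Lemma~\ref{l:e1e2}, set $h = v^* - v_\sigma \geq 0$, and use Lemma~\ref{l:e1} together with the Riesz-space characterization of irreducibility applied to the pair $(h, \delta_x)$ to force $h = 0$. Your explicit remark that the nonnegative series in \eqref{eq:pn} must vanish term by term is a slight elaboration of a step the paper leaves implicit, but the argument is the same.
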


For example, Theorem~\ref{t:strong_bk} tells us that, under the stated conditions, 
we can obtain an optimal policy by fixing an arbitrary initial state $x \in \Xsf$
and maximizing the real-valued function $s \mapsto v_s(x)$ over $\Sigma$. Alternatively, we can fix any
distribution $\rho$ and maximize $s \mapsto \inner{v_s, \rho}$.

\begin{proof}[Proof of Theorem~\ref{t:strong_bk}]
    In view of Lemma~\ref{l:e1e2}, it suffices to show that 
    \ref{itm:E1} and \ref{itm:E3} are equivalent.  That \ref{itm:E3} implies 
    \ref{itm:E1} is immediate from the definition of
    optimal policies.  Hence we need only show that \ref{itm:E1} implies \ref{itm:E3}. 
    In line with the conditions of Theorem~\ref{t:strong_bk}, we assume that 
    $\sigma$ is a feasible policy and $P_\sigma$ is strongly irreducible. 
    Using \ref{itm:E1}, we take $x \in \Xsf$ with $v_\sigma(x) = v^*(x)$.

    Let $h \coloneq v^* - v_\sigma$.  By the definition of $v^*$ we have $0 \leq h$.
    We claim in addition that $h = 0$.  To see this, suppose to the contrary that $h$ is
    nonzero. In this case, by strong irreducibility, 
    for each nonzero $\mu$ in the positive cone of $b\Xsf'$ we can find an 
    $m \in \NN$ such that $\inner{\mu, P_\sigma^m \, h} > 0$.  
    Because $\delta_{x}$ is a nonzero element of
    the positive cone of $b\Xsf'$ (Lemma~\ref{l:pe}), we can set $\mu = \delta_{x}$ to obtain
    an $m \in \NN$ with $(P_\sigma^m \, h) (x) > 0$. This contradicts
    \eqref{eq:pn}, so $h = 0$ holds.  In other words,
    $v_\sigma = v^*$.  This proves \ref{itm:E3}.
\end{proof}

\section{Topological Conditions}\label{s:tcs}

The discussion in Section~\ref{ss:sigi} shows that strong irreducibility cannot
be dropped without either (a) weakening the conclusions of
Theorems~\ref{t:strong_bk}, or (b) adding some side conditions. In this section,
we investigate both scenarios.   In particular, we show that
\begin{enumerate}
    \item even when irreducibility fails, optimality can pass across \emph{some}
        states under a continuity condition and a type of ``local irreducibility,''  and
    \item when seeking the full global conclusions of Theorem~\ref{t:strong_bk}, we
        can drop strong irreducibility if we assume a weaker form of
        irreducibility and pair it with continuity.
\end{enumerate}
The first topic is treated in Section~\ref{ss:reach}.  The second is treated in
Sections~\ref{ss:oi} and \ref{ss:wi}.

\subsection{Reachable States}\label{ss:reach}

To begin, we return to the general MDP setting from Section~\ref{ss:mdp}, where $\Xsf$ and
$\Asf$ are arbitrary metric spaces. Letting $Q$ be any stochastic kernel on
$\Xsf$, a point $y \in \Xsf$ is called \navy{$Q$-reachable} from $x \in \Xsf$
when, for each open neighborhood $G$ of $y$, there exists an $n \in \NN$ with
$Q^n(x, G) > 0$.  

\begin{lemma}\label{l:bk_access}
    Let $\sigma$ be any continuous policy.
    If $v_\sigma(x) = v^*(x)$ and $y$ is $P_\sigma$-reachable from $x$, then $v_\sigma(y) = v^*(y)$.
\end{lemma}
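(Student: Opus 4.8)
The plan is to combine the quantitative consequence of local optimality from Lemma~\ref{l:e1} with the topological information carried by reachability, using continuity of $v_\sigma$ as the bridge between them. Set $h \coloneq v^* - v_\sigma$, so that $h \geq 0$ on $\Xsf$ by the definition of $v^*$. First I would extract from Lemma~\ref{l:e1} the stronger pointwise statement that $(P_\sigma^n \, h)(x) = 0$ for \emph{every} $n \in \NN$. This is immediate: since $h \geq 0$ and each $P_\sigma^n(x, \cdot)$ is a probability measure, every summand in \eqref{eq:pn} is nonnegative, so a vanishing sum forces each term to vanish. Thus $\int h(x') P_\sigma^n(x, \diff x') = 0$ for all $n$.

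The key regularity step, which is where the continuity hypothesis on $\sigma$ is used, is to show that $v_\sigma$ (and hence $h$) is continuous. I would argue that $T_\sigma$ maps $cb\Xsf$ into itself: the reward term $r_\sigma(x) = r(x, \sigma(x))$ is continuous because $r$ is continuous on $\Gsf$ and $\sigma$ is continuous, while for $v \in cb\Xsf$ the map $x \mapsto (P_\sigma \, v)(x) = \int v(x') P(x, \sigma(x), \diff x')$ is continuous by assumption \ref{itm:d} composed with the continuous map $x \mapsto (x, \sigma(x))$ into $\Gsf$. Since $cb\Xsf$ is a closed subspace of $b\Xsf$ under the supremum norm and $T_\sigma$ is a $\beta$-contraction on $b\Xsf$ with unique fixed point $v_\sigma$, that fixed point must lie in $cb\Xsf$. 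Hence $h = v^* - v_\sigma \in cb\Xsf$, using also that $v^* \in cb\Xsf$ by Proposition~\ref{p:ff}.

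With continuity in hand, the conclusion follows by contradiction. Suppose $h(y) > 0$. By continuity of $h$ there is an open neighborhood $G$ of $y$ on which $h > h(y)/2$. Reachability of $y$ from $x$ supplies an $n \in \NN$ with $P_\sigma^n(x, G) > 0$, whence
\begin{equation*}
    (P_\sigma^n \, h)(x)
    = \int h(x') P_\sigma^n(x, \diff x')
    \geq \int_G h(x') P_\sigma^n(x, \diff x')
    \geq \frac{h(y)}{2}\, P_\sigma^n(x, G)
    > 0,
\end{equation*}
contradicting the first step. Therefore $h(y) = 0$, i.e.\ $v_\sigma(y) = v^*(y)$. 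I expect the main obstacle to be the continuity step: localizing to the neighborhood $G$ is what lets reachability interact with the positivity of $h$, and this localization is only available because continuity of $\sigma$ forces $v_\sigma$ to be continuous. Without it, $h$ could be positive at $y$ yet vanish $P_\sigma^n(x,\cdot)$-almost everywhere near $y$, and the argument would break down.
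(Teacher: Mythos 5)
Your proposal is correct and follows essentially the same route as the paper's proof: establish continuity of $v_\sigma$ via invariance of $T_\sigma$ on the closed subspace $cb\Xsf$ together with the contraction property, then derive a contradiction between $(P_\sigma^n\,h)(x)=0$ (from Lemma~\ref{l:e1}) and the positivity of $\int_G h\,P_\sigma^n(x,\diff x')$ on a neighborhood $G$ of $y$ supplied by reachability. Your lower bound $h > h(y)/2$ on $G$ is a slightly cleaner way to make the final integral strictly positive, but the argument is otherwise the same.
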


\begin{proof}
    As a preliminary step, we show that  $h \coloneq v^* - v_\sigma$ is
    continuous under the stated assumptions. Since $\sigma$ is continuous, our
    conditions on $(r, \Gamma, \beta, P)$ imply that the mappings $x \mapsto \int v(x')
    P(x,\sigma(x),dx')$ and $x \mapsto r(x, \sigma(x))$ are continuous on $\Xsf$ whenever $v\in cb\Xsf$. This implies
    that $T_\sigma$ is invariant on $cb\Xsf$. Moreover, $cb\Xsf$ is a closed
    subset of the complete metric space $b\Xsf$ under the supremum norm (since
    uniform limits of continuous functions are continuous). In addition, given $v,w\in b\Xsf$, we have
    \begin{align*}
     	\|T_\sigma v-T_\sigma w\|&\leq\beta\sup_{x\in\Xsf} \int |v(x')-w(x')| 
     	P(x, \sigma(x), \diff x')\\
     	&\leq\beta\sup_{x\in\Xsf} \int \|v-w\|
     	P(x, \sigma(x), \diff x') =\beta \|v-w\|.
    \end{align*}
    Since $\beta<1$, the contraction mapping theorem implies that $T_\sigma^n w
    \to v_\sigma$ for every $w \in b\Xsf$.  If we now fix $w \in cb\Xsf$ and
    use the fact that $T_\sigma$ is invariant on this set, we obtain a sequence
    $(T^n w)_{n \in \NN}$ converging to $v_\sigma$ and entirely contained in
    $cb\Xsf$.  As $cb\Xsf$ is closed in $b\Xsf$, this implies that 
    $v_\sigma$ is in $cb\Xsf$. In particular, $v_\sigma$ is continuous.  As $v^*$ 
    is also continuous (see Proposition~\ref{p:ff}), we see that $h$ is continuous. 

    Now fix $x \in \Xsf$. Seeking a contradiction, we suppose that $y$ is
    $P_\sigma$-reachable from $x$ and yet $h$ obeys
    $h(y) > 0$.   By this continuity and $h(y) > 0$, there exists an open
    neighborhood $G$ of $y$ with $h > 0$ on $G$. Because $y$ is
    $P_\sigma$-reachable from $x$, there exists an $n \in \NN$ with
    $P_\sigma^n(x, G) > 0$.  As a result, we have
    \begin{equation*}
	 	\int (v^*(x') - v_\sigma(x')) P_\sigma^n(x, \diff x') 
        \geq \int_G h(x') P_\sigma^n(x, \diff x') > 0.
    \end{equation*}
    But $v_\sigma(x) = v^*(x)$, so this inequality contradicts Lemma~\ref{l:e1}.
    The contradiction proves Lemma~\ref{l:bk_access}.
\end{proof}

\subsection{Open Set Irreducibility}\label{ss:oi}

The results in Section~\ref{ss:reach} discussed forms of ``local''
irreducibility and their implications.  In this section, we analyze settings where
these local conditions extend across the whole space and policies are
continuous.

In general, a transition kernel $Q$ from $\Xsf$ to itself is called \navy{open
set irreducible} if every $y \in \Xsf$ is reachable from every $x \in \Xsf$.
For continuous policies that generate open set irreducible transitions, we have
the following result.

\begin{theorem}\label{t:open_bk}
	If $P_\sigma$ is open set irreducible and  $\sigma$ is continuous, then
    {\rm \ref{itm:E1}--\ref{itm:E3}} are equivalent.
\end{theorem}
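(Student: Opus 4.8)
The plan is to deduce Theorem~\ref{t:open_bk} as a near-immediate corollary of the work already done, by combining Lemma~\ref{l:e1e2} with the reachability result of Lemma~\ref{l:bk_access}. As in the proof of Theorem~\ref{t:strong_bk}, it suffices to establish the equivalence of \ref{itm:E1} and \ref{itm:E3}, since \ref{itm:E1} and \ref{itm:E2} are always equivalent by Lemma~\ref{l:e1e2}, and \ref{itm:E3} trivially implies \ref{itm:E1} by the definition of an optimal policy. So the whole task reduces to showing \ref{itm:E1} $\Rightarrow$ \ref{itm:E3}.

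First I would invoke \ref{itm:E1} to fix a point $x \in \Xsf$ with $v_\sigma(x) = v^*(x)$. The goal is then to upgrade this single-state equality to the global equality $v_\sigma = v^*$. The key observation is that the hypotheses of Theorem~\ref{t:open_bk} are exactly tailored to let us apply Lemma~\ref{l:bk_access}: the policy $\sigma$ is continuous by assumption, so the lemma applies, and open set irreducibility of $P_\sigma$ means precisely that \emph{every} $y \in \Xsf$ is $P_\sigma$-reachable from our fixed $x$. Thus for an arbitrary $y \in \Xsf$, Lemma~\ref{l:bk_access} (with this $x$, this $y$, and our continuous $\sigma$) delivers $v_\sigma(y) = v^*(y)$. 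Since $y$ was arbitrary, this gives $v_\sigma = v^*$ on all of $\Xsf$, which is exactly statement \ref{itm:E3}.

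I do not expect any genuine obstacle here, since the theorem is essentially a repackaging: Lemma~\ref{l:bk_access} transmits optimality from $x$ to each reachable state, and open set irreducibility guarantees every state is reachable from $x$. The only point requiring a moment of care is confirming that the continuity hypothesis on $\sigma$ and the boundedness/continuity assumptions on the MDP primitives feed correctly into Lemma~\ref{l:bk_access} — but these are inherited verbatim from the standing assumptions and the statement of Theorem~\ref{t:open_bk}, so no new verification is needed. The proof is therefore short, and its structure mirrors the proof of Theorem~\ref{t:strong_bk}, merely substituting the reachability-based Lemma~\ref{l:bk_access} for the irreducibility-and-point-evaluation argument used in the strongly irreducible case.
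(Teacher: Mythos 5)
Your proposal is correct and follows essentially the same route as the paper's own proof: reduce to showing \ref{itm:E1} $\Rightarrow$ \ref{itm:E3} via Lemma~\ref{l:e1e2}, fix $x$ with $v_\sigma(x) = v^*(x)$, and apply Lemma~\ref{l:bk_access} to every $y \in \Xsf$, each of which is $P_\sigma$-reachable from $x$ by open set irreducibility. No gaps.
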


\begin{proof}
    In view of Lemma~\ref{l:e1e2}, it suffices to show \ref{itm:E1} implies \ref{itm:E3}.   
    So fix $x \in \Xsf$ and suppose that $v_\sigma(x) = v^*(x)$.
    For any $y \in \Xsf$, open set irreducibility implies that $y$ is $P_\sigma$-reachable from $x$. 
    Hence, by Lemma~\ref{l:bk_access}, we have $v_\sigma(y) = v^*(y)$.
    In particular, \ref{itm:E3} holds.
\end{proof}

\subsection{$\pi$-Irreducibility}\label{ss:wi}

We treat one more form of irreducibility, due to its importance in the
literature on Markov dynamics. In general, given a nontrivial measure $\pi$ on
$(\Xsf,\bB)$, a transition kernel $Q$ on $\Xsf$ is called
\navy{$\pi$-irreducible} if, for each $x \in X$ and every Borel set $B \subset
X$ with $\pi(B) > 0$, there exists an $n \in \NN$ such that $Q^n(x, B) \coloneq
(Q^n\1_B)(x) > 0$. (See, e.g., \cite{meyn2012markov}.) Here, we will say that
$Q$ is \navy{weakly irreducible} if there exists a measure $\pi$ on $(\Xsf,
\bB)$ such that
\begin{enumerate}
    \item[(ii)] $\pi$ assigns positive measure to all nonempty open sets, and
    \item[(i)] $Q$ is $\pi$-irreducible.
\end{enumerate}

\begin{lemma}\label{l:siwi}
    The following implications hold for any transition kernel $Q$ on $\Xsf$.
    \begin{enumerate}
        \item If $Q$ is strongly irreducible, then $Q$ is weakly irreducible.
        \item If $Q$ is weakly irreducible, then $Q$ is open set irreducible.
    \end{enumerate}
\end{lemma}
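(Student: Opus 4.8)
The plan is to treat the two implications separately, starting with part (b), which follows almost directly from the definitions, and then turning to part (a), where the construction of a witnessing measure is the real content.

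For part (b), I would assume $Q$ is weakly irreducible and let $\pi$ be a measure witnessing this, so that $\pi$ charges every nonempty open set and $Q$ is $\pi$-irreducible. Fixing arbitrary $x, y \in \Xsf$ and an arbitrary open neighborhood $G$ of $y$, the point $y$ lies in $G$, so $G$ is nonempty and $\pi(G) > 0$. Then $\pi$-irreducibility supplies an $n \in \NN$ with $Q^n(x, G) > 0$. Since $G$ was an arbitrary neighborhood of $y$, this shows $y$ is $Q$-reachable from $x$; as $x$ and $y$ were arbitrary, $Q$ is open set irreducible. I anticipate no difficulty here.

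For part (a), the key preliminary observation is that the Schaefer characterization of irreducibility already yields $\pi$-irreducibility for any nontrivial measure $\pi$: if $B \in \bB$ satisfies $\pi(B) > 0$, then $B$ is nonempty, so $\1_B$ is a nonzero element of $b\Xsf_+$; since $\delta_x$ is a nonzero element of $b\Xsf'_+$ by Lemma~\ref{l:pe}, strong irreducibility produces an $m$ with $Q^m(x, B) = \inner{\delta_x, Q^m \1_B} > 0$. The remaining and more delicate task is to exhibit a single measure $\pi$ that charges every nonempty open set, and to do so without assuming separability of $\Xsf$. Rather than building $\pi$ from a countable dense set, I would construct it from the dynamics: fixing any $x_0 \in \Xsf$, set
\[
    \pi(B) \coloneq \sum_{n=0}^\infty 2^{-n} \, Q^n(x_0, B)
    \qquad (B \in \bB),
\]
which is a finite, hence nontrivial, measure as a convergent countable sum of probability measures.

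It then remains to verify that this $\pi$ charges every nonempty open set, which I expect to be the crux of the whole lemma. Given a nonempty open $G$, the indicator $\1_G$ is again a nonzero element of $b\Xsf_+$, so applying strong irreducibility with $f = \1_G$ and $\mu = \delta_{x_0}$ yields an $m$ with $(Q^m \1_G)(x_0) = Q^m(x_0, G) > 0$; hence $\pi(G) \geq 2^{-m} Q^m(x_0, G) > 0$. Together with the $\pi$-irreducibility noted above, this establishes that $Q$ is weakly irreducible. The essential idea is that routing the construction of $\pi$ through the $Q$-orbit of a single point lets strong irreducibility guarantee that every nonempty open set is charged, which is exactly what bypasses any separability hypothesis that a dense-sequence construction would require.
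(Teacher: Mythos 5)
Your proof is correct, and part (b) coincides with the paper's argument. Part (a), however, takes a genuinely different route. The paper simply \emph{posits} a reference measure $\pi$ that charges every nonempty open set (relegating its existence to a footnote about Haar or Lebesgue measure), and then uses strong irreducibility only once, to verify $\pi$-irreducibility; strictly speaking this makes the implication conditional on $\Xsf$ admitting such a measure, which holds for separable or locally compact spaces but not for an arbitrary metric space. You instead manufacture the witnessing measure from the dynamics, $\pi = \sum_{n \geq 0} 2^{-n} Q^n(x_0, \cdot)$, and invoke the Schaefer characterization twice --- once with $f = \1_G$ to show $\pi$ charges every nonempty open $G$, and once with $f = \1_B$ to get $\pi$-irreducibility. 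This buys full generality (no separability or reference-measure hypothesis) at the cost of a slightly longer argument, and it also quietly repairs a small slip in the paper's version, which fixes $x \in B$ rather than letting $x$ range over all of $\Xsf$ as the definition of $\pi$-irreducibility requires; your use of an arbitrary $x \in \Xsf$ with $\delta_x \in b\Xsf'_+$ is the right reading. The one cosmetic point is that your $\pi$ has total mass $2$ rather than $1$, which is harmless since weak irreducibility only asks for a nontrivial measure (or normalize by $\tfrac{1}{2}$ if you prefer a distribution).
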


\begin{proof}
    Regarding (a), let $Q$ be strongly irreducible and let $\pi$ be any distribution on $\Xsf$ such
    that $\pi(G) > 0$ whenever $G \subset \Xsf$ is open and nonempty.\footnote{
    Such a measure exists in many settings, such as when
    $\Xsf$ is a locally compact topological group -- in which case we can take
    $\pi$ to be the Haar measure.  In many applications, $\Xsf$ will be a subset of
    $\RR^n$ and $\pi$ will be Lebesgue measure.} Fix $B \in \bB$ with 
    $\pi(B) > 0$ and fix $x \in  B$.
    We recall from Lemma~\ref{l:pe} that $\delta_x$ is a nonzero element of the
    dual space $b\Xsf'$.  Also, $B$ is not the empty set because $\pi(B) > 0$,
    so $\1_B$ is a nonzero element of $b\Xsf$. 
    Hence, by strong irreducibility, there exists 
    an $n \in \NN$ with $\inner{\delta_x, Q^n\1_B} > 0$.  We can rewrite this as
    $Q^n(x, B) = (Q^n\1_B)(x)>0$.  This proves that $Q$ is $\pi$-irreducible.
    We conclude that $Q$ is weakly irreducible.

    Regarding (b), let $Q$ be weakly irreducible and let $\pi$ be the measure in
    (i)--(ii) of the definition of weak irreducibility.  Pick any $x, y \in
    \Xsf$ and let $G$ be any open neighborhood of $y$.  By (i), we have $\pi(G) > 0$.
    By (ii), we can find an $m \in \NN$ with $P^m(x, G) > 0$.  Hence $y$ is
    reachable from $x$.  Since $x$ and $y$ were chosen arbitrarily, we conclude
    that $Q$ is open set irreducible.
\end{proof}

Now we state a result for the weakly irreducible case.  In the statement,
$\sigma$ is any feasible policy.

\begin{theorem}\label{t:weak_bk}
	If $P_\sigma$ is weakly irreducible and  $\sigma$ is continuous, then
    {\rm \ref{itm:E1}--\ref{itm:E3}} are equivalent.
\end{theorem}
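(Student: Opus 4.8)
The plan is to prove Theorem~\ref{t:weak_bk} by reducing it to results already established, exploiting the implication chain in Lemma~\ref{l:siwi}. The key observation is that weak irreducibility, together with continuity of $\sigma$, gives me exactly the hypotheses of Theorem~\ref{t:open_bk}. So the entire proof should be a short deduction rather than a fresh argument.

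First I would invoke Lemma~\ref{l:siwi}(b): since $P_\sigma$ is assumed weakly irreducible, it is automatically open set irreducible. Second, I note that $\sigma$ is assumed continuous in the statement of the present theorem, so this hypothesis carries over unchanged. Third, I would simply apply Theorem~\ref{t:open_bk}, whose hypotheses are precisely ``$P_\sigma$ is open set irreducible and $\sigma$ is continuous,'' and whose conclusion is that \ref{itm:E1}--\ref{itm:E3} are equivalent. That is exactly what we want to establish here.

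In full, the proof is essentially a one-line chain of implications:
\begin{equation*}
    P_\sigma \text{ weakly irreducible}
    \;\xRightarrow{\text{Lem.~\ref{l:siwi}(b)}}\;
    P_\sigma \text{ open set irreducible},
\end{equation*}
after which Theorem~\ref{t:open_bk} delivers the equivalence of \ref{itm:E1}--\ref{itm:E3} since $\sigma$ is continuous by assumption.

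I do not anticipate any real obstacle here: the substantive work has all been front-loaded into Lemma~\ref{l:siwi} (relating the irreducibility notions) and Theorem~\ref{t:open_bk} (doing the local-to-global transmission via reachability and Lemma~\ref{l:bk_access}). If there is any subtlety to flag, it is purely bookkeeping --- namely that the definition of weak irreducibility in Section~\ref{ss:wi} labels its two clauses as (ii) and (i) in an apparently transposed order, so I would make sure my citation of Lemma~\ref{l:siwi}(b) tracks the intended meaning (that $\pi$ charges nonempty open sets and $P_\sigma$ is $\pi$-irreducible). Beyond that minor care, the theorem is a direct corollary of the preceding two results.
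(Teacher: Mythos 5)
Your proposal is correct and matches the paper's own proof, which likewise deduces open set irreducibility from Lemma~\ref{l:siwi} and then applies Theorem~\ref{t:open_bk}. The only cosmetic difference is that the paper first invokes Lemma~\ref{l:e1e2} to reduce to showing \ref{itm:E1} implies \ref{itm:E3}, whereas you apply Theorem~\ref{t:open_bk} to obtain the full equivalence directly; both are valid.
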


\begin{proof}
    In view of Lemma~\ref{l:e1e2}, it suffices to show \ref{itm:E1} implies \ref{itm:E3}.   
    This is true by Theorem~\ref{t:open_bk} and Lemma~\ref{l:siwi}.
\end{proof}

\section{Applications}

In this section, we consider three applications.  The first involves an optimal
savings problem with stochastic returns on wealth, and our aim is to illustrate
the significance of our main theoretical results.  The second application  
provides an extension showing how results in the paper can be applied outside
the MDP framework.  The third application, found
in Section~\ref{ss:sigi}, demonstrates that the irreducibility assumptions in
Theorems~\ref{t:strong_bk}, \ref{t:open_bk}, and \ref{t:weak_bk} cannot be
removed without changing their conclusions.

\subsection{Optimal Savings}\label{s:os}

Consider an agent who seeks to maximize lifetime utility by choosing 
optimal savings and consumption. The evolution of wealth is governed by the equation
\begin{equation}\label{eq:wealth_flow}
    w_{t+1} = \eta_{t+1} (w_t - c_t) + y_{t+1},
    \qquad t=0, 1, \ldots,
\end{equation}
where $w_t \in \RR_+$ is time-$t$ wealth, $c_t$ is the current-period
consumption, $y_{t+1}$ is the next-period labor income, and $\eta_{t+1}$
represents the stochastic return on savings. The sequences $(y_{t})$ and
$(\eta_t)$ are IID with distributions $\phi$ and $\psi$ respectively.  For now
we assume that both of these distributions have full support on $\RR_+$. To 
simplify the notation, we use $w' = \eta'(w - c) + y'$ to denote the evolution 
of wealth.

We formulate this problem as an MDP. The state space is $\RR_+$
and the set of feasible actions at wealth level $w$ is 
 $\Gamma(w) = \{c \in \RR_+ : c \leq w\}$.
A feasible policy in this setup is a Borel measurable function $\sigma$ from
$\RR_+$ to itself satisfying $\sigma(w) \leq w$ for all $w \in \RR_+$.
The reward function is $r(w, c) \coloneq u(c)$,
where $u(c)$ is the utility derived from consumption and $u$ is
continuous, differentiable, and strictly concave on $\RR_+$. 

The transition kernel $P(w, c, \diff)$ is given by
\begin{equation}\label{eq:kernel_P}
    P(w, c, B) = \int \1_B (\eta'(w - c) + y') \psi(d\eta') \phi(dy'),
\end{equation}
where $0 \leq c \leq w$ and $B$ is a Borel set in $\RR_+$. 
Given $\sigma \in \Sigma$, the corresponding policy operator $T_\sigma$ is given
by
$$
    (T_\sigma v)(w) 
    = u(\sigma(w)) + \beta (P_\sigma \, v)(w)
    \coloneq u(\sigma(w)) + \beta \int v(w') P(w, \sigma(w), \diff w'),
$$
where $\beta \in (0, 1)$ is the discount factor. 
Using this setup, we have the following result:

\begin{lemma}\label{l:irr_P_w}
    Under the stated assumptions, the optimal savings transition kernel
	$P_\sigma$ is open set irreducible for every $\sigma \in \Sigma$.
\end{lemma}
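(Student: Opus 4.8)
The plan is to prove the stronger one-step statement that $P_\sigma(w, G) > 0$ for every $w \in \RR_+$ and every nonempty open $G \subseteq \RR_+$. Once this is established, reachability is immediate: given any $x, y \in \RR_+$ and any open neighborhood $G$ of $y$, we simply take $n = 1$ in the definition of $P_\sigma$-reachability, so that $P_\sigma^{\,1}(x,G) = P_\sigma(x,G) > 0$. Since $x$ and $y$ are arbitrary, every state is reachable from every other state, which is exactly open set irreducibility. I note that this reduction uses nothing about $\sigma$ beyond feasibility, which is why the conclusion holds for every $\sigma \in \Sigma$ with no continuity requirement.

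To carry this out, fix $w \in \RR_+$ and write $s \coloneq w - \sigma(w)$, so that $s \geq 0$ by feasibility and, under $P_\sigma$, next-period wealth is distributed as $w' = \eta' s + y'$ with $\eta' \sim \psi$ and $y' \sim \phi$ independent. If $s = 0$, then $w' = y'$, so $P_\sigma(w, G) = \phi(G)$, which is strictly positive because $\phi$ has full support on $\RR_+$. This disposes of the degenerate case, and the remaining work is the case $s > 0$.

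For $s > 0$, the key step is an elementary support construction. Since $G$ is open and nonempty in $\RR_+$, it contains some point $z > 0$ (openness rules out $G = \{0\}$) together with a ball $B(z, \delta) \subseteq G$. I then choose bounded open intervals $I_\eta \ni z/(2s)$ and $I_y \ni z/2$, both contained in $(0, \infty)$, small enough that $s\, I_\eta + I_y \subseteq B(z, \delta)$; this is possible because the continuous map $(\eta', y') \mapsto \eta' s + y'$ sends $(z/(2s),\, z/2)$ to $z$. Because $\psi$ and $\phi$ have full support on $\RR_+$, these intervals satisfy $\psi(I_\eta) > 0$ and $\phi(I_y) > 0$, and independence gives
\begin{equation*}
    P_\sigma(w, G) \geq \Prob\{\eta' \in I_\eta,\ y' \in I_y\} = \psi(I_\eta)\,\phi(I_y) > 0,
\end{equation*}
since on the event $\{\eta' \in I_\eta,\ y' \in I_y\}$ we have $w' = \eta' s + y' \in s\, I_\eta + I_y \subseteq G$.

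The argument is essentially routine, and the only point demanding care is the interval bookkeeping in the last paragraph: one must place $I_\eta$ and $I_y$ simultaneously inside $(0,\infty) \subseteq \RR_+$ (so that full support yields positive mass) while arranging that their image under $(\eta', y') \mapsto \eta' s + y'$ lands inside $B(z,\delta)$. Centering $\eta'$ strictly between $0$ and $z/s$, namely at $z/(2s)$, guarantees that the matching center for $y'$, namely $z/2$, is also strictly positive, so both intervals live in $(0,\infty)$; shrinking their radii then forces their image into $B(z,\delta)$. I expect no genuine obstacle beyond this verification, and in particular the measurability of $\sigma$ is all that is used, consistent with the claim for \emph{every} $\sigma \in \Sigma$.
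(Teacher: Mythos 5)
Your proof is correct, and it reaches the same one-step conclusion as the paper ($P_\sigma(w,G)>0$ for every $w$ and every nonempty open $G$, so reachability holds with $n=1$) by a genuinely different route. The paper performs a change of variables to exhibit the density of $w' = \eta'(w-\sigma(w)) + y'$ as a convolution $f(w') = \int_0^{w'/\alpha}\phi(\eta')\psi(w'-\alpha\eta')\,d\eta'$, argues that $f>0$ pointwise from strict positivity of the densities of $\phi$ and $\psi$, and then integrates $f$ over an interval contained in the open set. You instead work directly with full support of the measures: pick $z>0$ in $G$, center $\eta'$ at $z/(2s)$ and $y'$ at $z/2$, and use continuity of $(\eta',y')\mapsto \eta's+y'$ together with independence to get $P_\sigma(w,G)\geq \psi(I_\eta)\phi(I_y)>0$. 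Your argument is more elementary and slightly more general, since it never requires $\phi$ and $\psi$ to admit densities, only that they assign positive mass to nonempty open subsets of $\RR_+$. It also cleanly disposes of the degenerate case $s = w-\sigma(w)=0$ (where $w'=y'$ and $P_\sigma(w,G)=\phi(G)$), a case in which the paper's change-of-variables formula, with its upper limit $w'/\alpha$, is not literally defined; the paper's proof implicitly assumes $\alpha>0$. The paper's density computation does yield a concrete formula for the transition density, which could be useful elsewhere, but for the purpose of this lemma your support-based argument is tighter.
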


\begin{proof}
	Let $\Sigma$ be the set of feasible policies. Fix $\bar w \in \RR_+$ and an open set
	$B \subseteq \RR_+$ with $\pi(B) > 0$. 
    Let $\alpha = w - \sigma(w)$. By a change of variable, we obtain
    \begin{align*}
        P_\sigma(\bar w,B) 
        & = \int \1_B(w') \psi(\eta') \phi(w' - \alpha \eta') 
                    \, d\eta' \, dw'
                    \\
        & = \int_B \left( \int_0^\infty \psi(\eta') \phi(w' - 
        \alpha \eta') 
        \, d\eta' \right) dw'.
    \end{align*}
    We treat the inner integral first. Since $\eta'$ and $y'$ are independent 
    random variables on $\RR_+$ with strictly positive densities $\phi$ and $\psi$ 
    respectively and $w' = \alpha \eta' + y'$, we have 
    the probability density function $f$ of $w'$ at any point $w' > 0$ is given 
    by the convolution
    $$
      f(w') = \int_0^\infty \phi(\eta') \psi(w' - \alpha \eta') d\eta'
    $$
    as in the inner integral.
    Since $w' = \eta'(w - c)+y'$ and $y' > 0$, $w' - \alpha \eta' > 0$, 
    which implies that $0 < \eta' < \frac{w'}{\alpha}$. Therefore, we have
    $$
      f(w') = \int_0^{\frac{w'}{\alpha}} \phi(\eta') \psi(w' - \alpha \eta') 
      d\eta'.
    $$
    Note that $\phi(\eta') > 0$ for $\eta' > 0$ and $\psi(w'-\alpha \eta') > 0$ 
    with $\eta' \in (0, \frac{w'}{\alpha})$. Moreover, 
    $|\frac{w'}{\alpha}| > 0$  for every $w' > 0$. Hence $f(w') > 0$ 
    for all $w' \in \RR_+$. Since $B$ is open, there is an nonempty open interval
    $(l,m)\subset B$. Thus, 
    $P_\sigma(w, B) =\int_B f(w') \; dw' \geq\int_{l}^{m} f(w') \; dw'> 0$.
     That is, $P_\sigma$ is open set irreducible.
\end{proof}

Let
\begin{equation*} 
    B(w,c,v)=u(c)+\beta\int\int v(\eta'(w-c)+y')\psi(d\eta') \phi(dy').
\end{equation*}
Since $u$ is strictly concave, the map $c\mapsto B(w,c,v)$ is strictly
concave whenever $v$ is concave on $\RR_+$.  One can also show that $v^*$ is
concave on $\RR_+$.  Combining these facts with the Bellman equation,
it is straightforward to show that the optimal policy is 
both unique and continuous.  We record this in the proposition below.
More details on the arguments can be found in Chapter~12 of~\cite{stachurski2022economic}.

\begin{proposition}\label{p:uc}
    Under the assumptions stated above, the optimal policy of the optimal
    savings model is unique and continuous on $\RR_+$.
\end{proposition}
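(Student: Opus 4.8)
The plan is to follow the roadmap indicated just before the statement: first establish that the value function $v^*$ is concave, then use strict concavity of the per-period objective to obtain uniqueness, and finally invoke Berge's maximum theorem to obtain continuity. Throughout I treat optimal policies as $v^*$-greedy policies, which is justified since a feasible $\sigma$ satisfies $v_\sigma = v^*$ if and only if $T_\sigma v^* = v^*$ (a standard consequence of the fixed-point characterization in Proposition~\ref{p:ff}: if $v_\sigma = v^*$ then $T_\sigma v^* = T_\sigma v_\sigma = v_\sigma = v^*$, while if $T_\sigma v^* = v^*$ then $v^*$ is the unique fixed point of $T_\sigma$, namely $v_\sigma$).

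The first and most substantial step is to show that $v^*$ is concave on $\RR_+$. I would argue that the Bellman operator $T$ maps concave functions in $cb\Xsf$ to concave functions. The key observation is that $B(w,c,v)$ is jointly concave in $(w,c)$ whenever $v$ is concave: the argument $\eta'(w-c)+y'$ of $v$ is affine in $(w,c)$, so $v(\eta'(w-c)+y')$ is concave in $(w,c)$ for each fixed $(\eta',y')$, and concavity is preserved under integration against $\psi$ and $\phi$ and under addition of the concave term $u(c)$. Given this, for $w_1,w_2 \in \RR_+$, $\lambda \in [0,1]$, and respective maximizers $c_1,c_2$, the combination $\lambda c_1 + (1-\lambda)c_2$ is feasible at $\lambda w_1 + (1-\lambda)w_2$ (since $c_i \leq w_i$ forces $\lambda c_1 + (1-\lambda)c_2 \leq \lambda w_1 + (1-\lambda)w_2$), whence $(Tv)(\lambda w_1 + (1-\lambda)w_2) \geq \lambda (Tv)(w_1) + (1-\lambda)(Tv)(w_2)$. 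Since the concave functions in $cb\Xsf$ form a closed subset under the supremum norm (uniform limits of concave functions are concave) and $T$ is a contraction with fixed point $v^*$, iterating $T$ from any concave starting point such as the zero function yields a sequence of concave functions converging uniformly to $v^*$; hence $v^*$ is concave.

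With $v^*$ concave, the stated property of $B$ gives that $c \mapsto B(w,c,v^*)$ is strictly concave on the convex feasible set $[0,w]$ for each $w$, so it admits at most one maximizer; this makes the greedy correspondence $w \mapsto \argmax_{c \in \Gamma(w)} B(w,c,v^*)$ single-valued, which by the characterization above yields uniqueness of the optimal policy. For continuity I would apply Berge's maximum theorem to $\sigma(w) = \argmax_{c \in \Gamma(w)} B(w,c,v^*)$. Here $B(\cdot,\cdot,v^*)$ is continuous on $\Gsf$, since $u$ is continuous and, as $v^* \in cb\Xsf$, assumption~\ref{itm:d} makes the integral term continuous in $(w,c)$; moreover $\Gamma(w) = [0,w]$ is continuous and compact-valued by~\ref{itm:b}. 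Berge's theorem then gives that the argmax correspondence is upper hemicontinuous, and combined with single-valuedness this forces $\sigma$ to be continuous.

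The main obstacle is the concavity-preservation step: one must verify the joint concavity of $B$ in $(w,c)$, confirm that convex combinations of feasible consumption choices remain feasible at the corresponding convex combination of wealth levels, and carry concavity through the contraction iteration to the limit $v^*$ via the closure of the concave cone. Once $v^*$ is known to be concave, both uniqueness and continuity follow routinely from strict concavity and Berge's theorem.
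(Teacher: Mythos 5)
Your proposal is correct and follows the same route the paper intends: the paper itself only sketches the argument (concavity of $v^*$, strict concavity of $c \mapsto B(w,c,v^*)$, hence a unique greedy maximizer, with continuity left as "straightforward" and details deferred to Chapter~12 of \cite{stachurski2022economic}). You have simply filled in those standard details---concavity preservation by $T$, closure of the concave cone under uniform limits, the greedy-policy characterization, and Berge's maximum theorem---exactly as the cited reference does.
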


Given the open set irreducibility of the transition kernel at any feasible policy
stated in Lemma~\ref{l:irr_P_w}, Theorem~\ref{t:open_bk} implies that we can compute a
(globally) optimal policy by maximizing $v_\sigma(\bar w)$ at any fixed $\bar w \in
\RR_+$.  We now explore this result in a computational experiment, where the
maximization of $v_\sigma(\bar w)$ is based on a simple gradient ascent algorithm
with a neural network to approximate the policy function $\sigma$.

Let $\theta\in \RR^d$ be the network parameters that 
characterize the policy network 
$\hat \sigma(\cdot; \theta)$ with a fixed paramter size $d \in \NN$.
We want to find the policy parameterization that maximizes the expected 
discounted sum of rewards defined by $ u(\hat \sigma(w; \theta))$ 
starting from the initial wealth level $\bar w$:
\begin{equation*}
    \max_{\hat \sigma \in \Sigma} v_{\hat \sigma}(\bar w) \coloneq 
    \max_{\hat \sigma \in \Sigma} 
    \sum_{t=0}^\infty 
       \beta^t \int u(\hat \sigma(w'; \theta)) P^t_{\hat \sigma}(\bar w, \diff w').
\end{equation*}
We can approximate this by Monte Carlo with large batch size $N$ 
and rollout steps $T$:
\begin{equation} \label{eq:v_hat_sigma}
    \hat v_{\hat \sigma}(\bar w) 
    \coloneq \frac{1}{N}\sum_{i=1}^N \sum_{t=0}^{T-1} 
    \beta^t u(\hat \sigma(w_{i,t}; \theta)) \quad \text{with} \quad
    w_{i,0} = \bar w \text{ for } i=1,\ldots,N, 
\end{equation}
Here $w_{i,t}$ is the wealth at time $t$ for the $i$-th sample, conditional on
starting at $\bar w$ and following policy $\hat \sigma$.
(In particular, we generate the rollout trajectories $(w_{i,t})_{t=0}^T$ for all $i=1,\ldots,N$ according to 
the wealth dynamics~\eqref{eq:wealth_flow} with consumption at time $t$  following 
$\hat \sigma(w_{i,t}; \theta)$.)
The fact that $w_{i, 0} = \bar w$ for all $i=1,\ldots,N$ means that we are
optimizing lifetime value over policies while freezing the initial state $w_0 = \bar w$. 

During each episode, the model parameters are updated to reduce the value of the loss function
\begin{equation}\label{eq:loss}
    L(\theta) =  - 
    \frac{1}{N}\sum_{i=1}^N \sum_{t=0}^{T-1} \beta^t u(\hat \sigma(w_{i,t}; \theta))
\end{equation}
using gradient descent. It is clear that minimizing the loss 
is equivalent to maximizing the finite time approximation of the expected lifetime value \eqref{eq:v_hat_sigma}.

The complete algorithm is detailed in~\algoref{alg:dpvi} in the Appendix. Our approach is 
similar to the REINFORCE algorithm~\citep{williams1992simple}, with a key distinction: since 
the utility function in our setup is differentiable, we can compute well-defined gradients 
without requiring a stochastic policy.

Several techniques could enhance the algorithm's stability and convergence, including 
reinforcement learning baselines~\citep{williams1992simple} and more advanced approaches 
applying actor-critic methods~\citep{silver2014deterministic, lillicrap2019continuous}. 
We deliberately avoid these extensions in this paper to maintain algorithmic clarity and 
simplicity, allowing us to establish a more direct connection to our theoretical results.
\begin{figure}[ht]
    \centering
    \includegraphics[width=\textwidth]{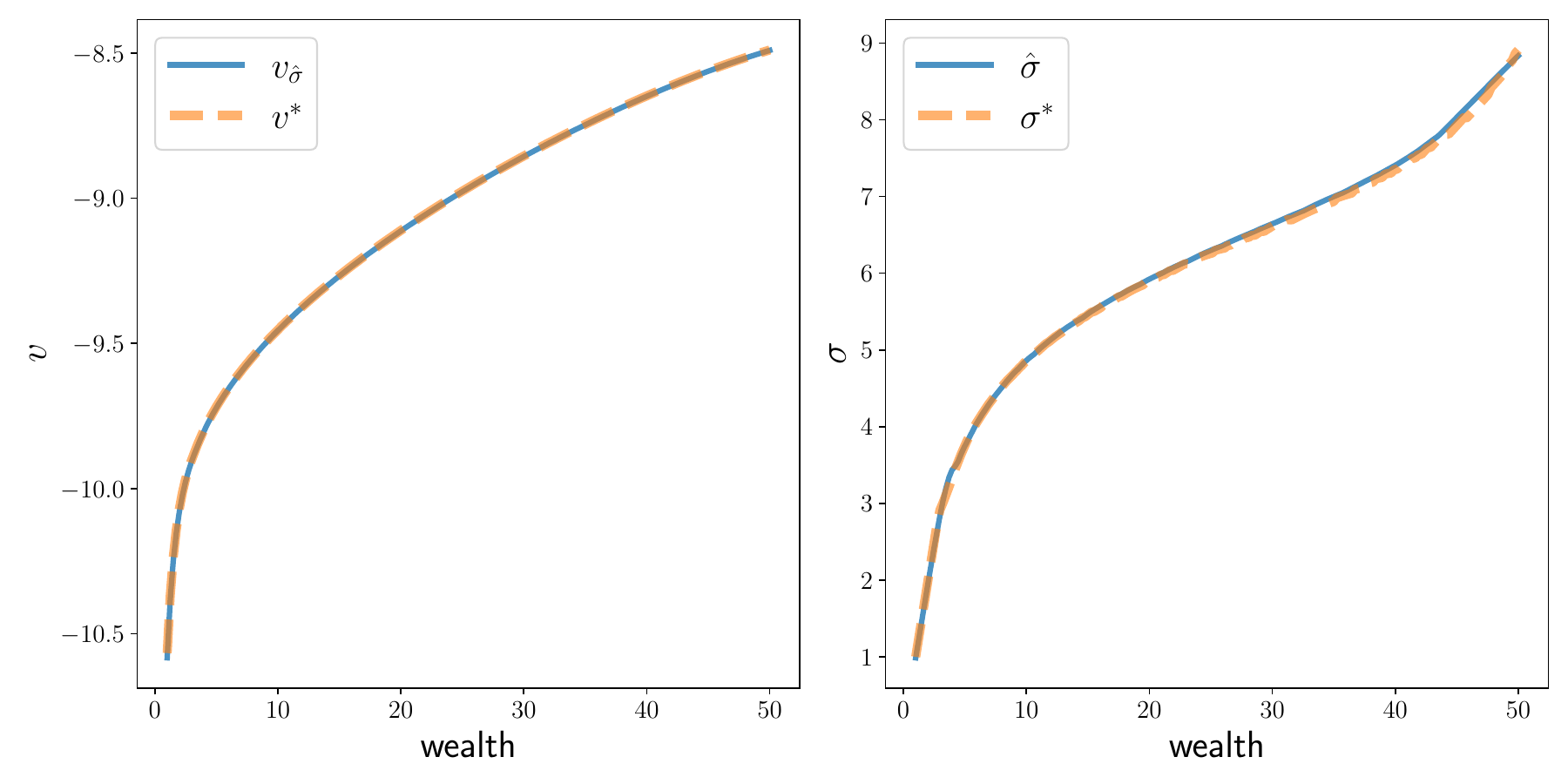}
    \caption{$v_{\hat \sigma}$ and $\hat \sigma$ with $\bar w = 1$ vs OPI solutions}
    \label{fig:dpg_vfi_irr}

    \vspace{2em}
    \centering
    \includegraphics[width=0.75\textwidth]{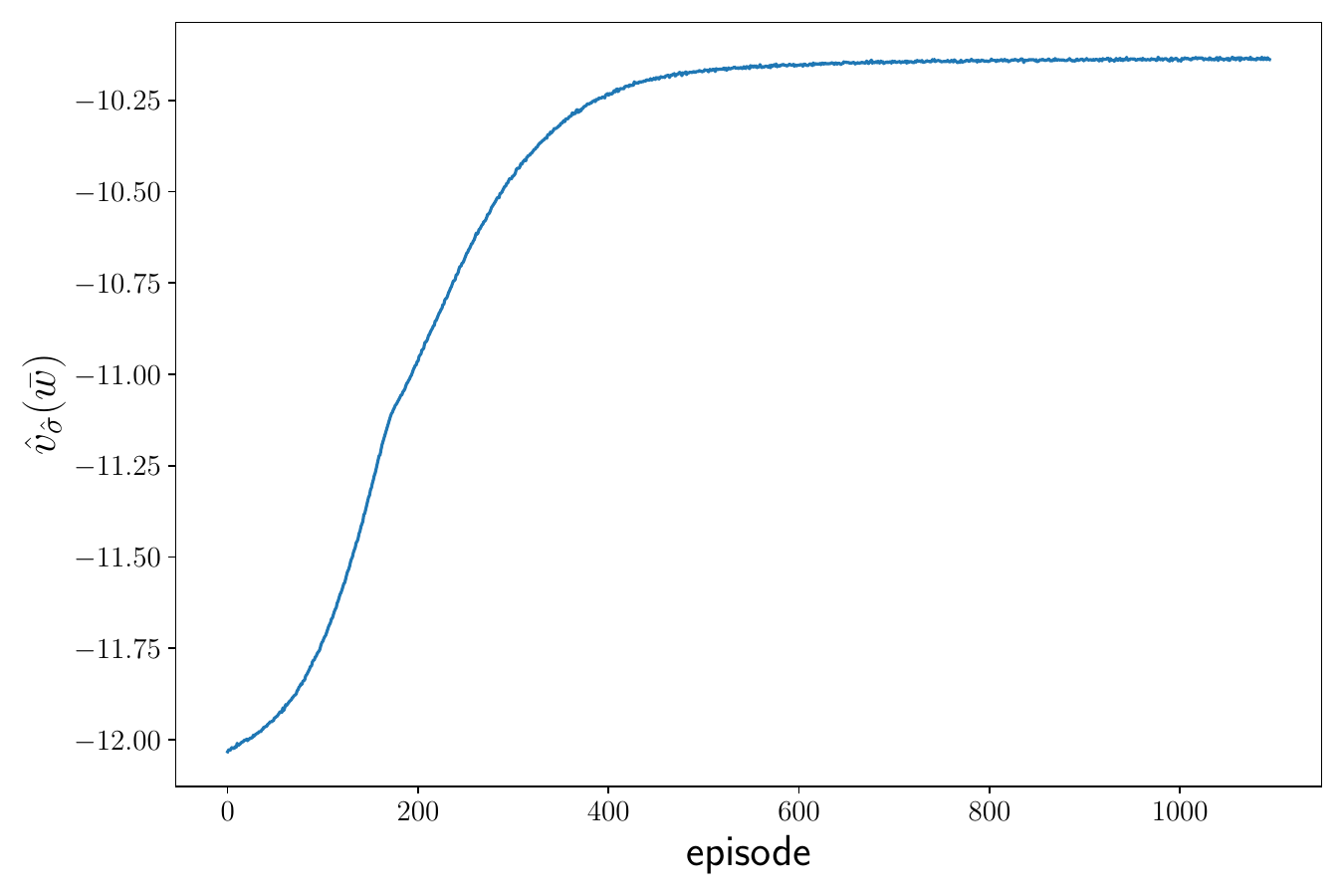}
    \caption{$\hat v_{\hat \sigma}(\bar w)$ over training episode for the 
    irreducible optimal saving model at $\bar w = 1$}
    \label{fig:irr_training}
\end{figure}
To verify our theory, we benchmark the result from optimizing the policy at 
one point against Optimistic Policy Iteration (OPI), a variant of Value Function 
Iteration (VFI) that is known to converge globally to the optimal policy in this 
model-based setting~\citep{sargent2025dynamic}. 
The OPI algorithm operates on a discretized state space and serves as our 
ground truth comparison. Specifically, we compute $v^*$ 
by applying OPI over a fine grid of wealth levels, which yields a global approximation 
of the optimal value function and corresponding optimal policy $\sigma^*$.

Regarding $v_{\hat \sigma}$, we compute this function in two steps.  First, we
freeze a single initial wealth level $\bar w$ and minimize the loss according to
\eqref{eq:loss}, as described above.  Second, we take the resulting policy $\hat \sigma$ and,
holding this policy fixed, calculate the function $v_{\hat \sigma}$ 
by computing the lifetime value of $\hat \sigma$ from alternative initial conditions.
Finally, we compare $v_{\hat \sigma}$ with the globally optimal solution $v^*$. 
In the experiments, we use the CRRA utility function $u(c) = c^{1-\gamma}/(1-\gamma)$
with $\gamma = 2$.

Figure~\ref{fig:dpg_vfi_irr} shows the result of these computations when $\bar w =
1$. The function $v_{\hat \sigma}$, shown in blue, 
closely matches the globally optimal value function $v^*$ computed 
via OPI (red dotted line). Similarly, in the second panel, 
the approximated policy $\hat \sigma$ (blue line) closely matches 
the optimal policy $\sigma^*$ (red dotted line).
This convergence demonstrates that both the $\hat \sigma$-value 
function $v_{\hat \sigma}$ and the policy $\hat \sigma$ successfully recover
their globally optimal counterparts $v^*$ and $\sigma^*$, respectively. This is
consistent with the result in Theorem~\ref{t:open_bk}, which states that we can
compute a globally optimal policy by solving $\max_{\sigma \in \Sigma} v_\sigma(\bar w)$ at any fixed
$\bar w \in \RR_+$. 

In subsequent experiments, we tested the robustness of this outcome to variation
in the fixed initial condition $\bar w$.  We found that, as predicted by theory,
the resulting function $v_{\hat \sigma}$ again closely approximates $v^*$,
and the resulting policy $\hat \sigma$ again closely approximates $\sigma^*$.
See Figure~\ref{fig:dpg_vfi_irr_50} in Appendix~\ref{app:irr_50} for a
visualization of one experiment.

In Figure~\ref{fig:irr_training}, 
the $y$-axis is the finite 
time approximation of the infinite horizon lifetime value function $v_{\hat \sigma}$
defined in~\eqref{eq:v_hat_sigma} and the $x$-axis is the number of training episodes,
which records the number of times the policy $\hat \sigma$ is updated according to 
the $\hat v_{\hat \sigma}(\bar w)$ computed from the $N$ Monte Carlo samples.
The figure shows consistent improvement in approximated lifetime value from following the 
policy $\hat \sigma$ that improves over the training episodes. The sequence of approximated 
lifetime value $\hat v_{\hat \sigma}(\bar w)$ exhibit a sharp increase in the first half 
of the training episodes, followed by a stable plateau.
\footnote{The sequence of approximated lifetime value converges 
to a higher value than $v_{\hat \sigma}(1)$. This is because the 
$\hat v_{\hat \sigma}(\bar w)$ is a finite time approximation (with rollout $T$) of the infinite 
horizon lifetime value function $v_{\hat \sigma}$. The same phenomenon occurs 
in Figure~\ref{fig:dpg_vfi_irr_50} and subsequent experiments in the reducible
case.}

\subsection{A Reducible MDP}

Now consider a modified version of the optimal saving MDP where both returns and
labor income are bounded:
\begin{equation}
    w' = \eta'(w - c) + y', \quad \eta' \in [\underline{\eta}, \overline{\eta}],
    \quad y' \in [\underline{y}, \overline{y}]
\end{equation}
with $0 < \underline{\eta} < \overline{\eta} < 1$ and $0 \leq \underline{y} <
\overline{y} < \infty$. For $w \in \RR_+$ and a Borel set $B \subseteq \RR_+$,
the stochastic kernel $P_\sigma$ is
\begin{equation}
    P_\sigma(w, B) = \int \1_B(\eta'(w - \sigma(w)) + y') \psi(d\eta') \phi(dy')
\end{equation}
where $\psi$ and $\phi$ have support on $[\underline{\eta}, \overline{\eta}]$ and
$[\underline{y}, \overline{y}]$ respectively. In this case, we let $\psi$ and
$\phi$ be uniform distributions.  For the rest of this section, we always set
$[\underline{\eta}, \overline{\eta}] = [0.5, 0.8]$ and $[\underline{y}, \overline{y}] = [1, 8]$.
At the same time, we deliberately continue to take $\RR_+$ as the state space.
This leads to a failure of irreducibility, as clarified in the next proposition.

\begin{proposition}
    For any $\sigma \in \Sigma$, the transition kernel $P_\sigma$ is neither
    weakly irreducible nor open set irreducible.
\end{proposition}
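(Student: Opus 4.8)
The plan is to exploit the one structural feature that makes this model reducible on the oversized state space $\RR_+$: because labor income is bounded below by $\underline{y} = 1 > 0$, next-period wealth is \emph{always} at least $1$, regardless of the current state, the action taken, or the policy in force. Consequently the slab $[0,1)$ of the state space is never visited after the first transition, and it is this permanently unreachable region that destroys both forms of irreducibility. I would establish failure of open set irreducibility directly, and then obtain failure of weak irreducibility for free from Lemma~\ref{l:siwi}.

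First I would fix an arbitrary $\sigma \in \Sigma$ and record the uniform lower bound on the support of the one-step kernel. Writing $s = w - \sigma(w)$, feasibility gives $s \geq 0$, and since $\eta' \geq \underline{\eta} > 0$ and $y' \geq \underline{y} = 1$ we get $w' = \eta' s + y' \geq 1$. Hence $P_\sigma(w, [1,\infty)) = 1$ for every $w \in \RR_+$. Because this one-step bound holds at \emph{every} point, composition immediately yields $P_\sigma^n(x, [1,\infty)) = 1$ for all $n \geq 1$ and all $x$; equivalently, $P_\sigma^n(x, (0,1)) = 0$ for all such $n$ and $x$. This is the only computation in the argument, and it is routine.

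Next I would pick the target $y_0 = \tfrac12$ together with the open neighborhood $G = (0,1)$, and a starting point $x = 2 \notin G$. For every $n \geq 1$ we have $P_\sigma^n(x, G) = 0$ since $G \subset [0,1)$ is disjoint from the support $[1,\infty)$; and for $n = 0$ we have $P_\sigma^0(x, G) = \1_G(2) = 0$ because $x \notin G$. Thus $P_\sigma^n(x, G) = 0$ for every $n \in \NN$, so $y_0$ is not $P_\sigma$-reachable from $x$, and therefore $P_\sigma$ is not open set irreducible. Finally, since weak irreducibility implies open set irreducibility by Lemma~\ref{l:siwi}, the contrapositive shows that $P_\sigma$ is not weakly irreducible either. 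As $\sigma$ was arbitrary, the conclusion holds for all feasible policies.

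There is no deep obstacle here; the proof reduces to a one-line support estimate. The only points needing care are (i) keeping the argument uniform over all $\sigma$, which is automatic because the bound $w' \geq \underline{y}$ uses only feasibility ($s \geq 0$) and positivity of $\underline{y}$; (ii) choosing the starting point outside $G$ so that the $n = 0$ case of the reachability definition is also excluded; and (iii) invoking Lemma~\ref{l:siwi} in contrapositive form rather than re-deriving non-$\pi$-irreducibility by hand (although the direct route is equally short: no measure charging every nonempty open set can render $P_\sigma$ $\pi$-irreducible, since such a $\pi$ has $\pi((0,1)) > 0$ while $P_\sigma^n(2,(0,1)) = 0$ for all $n$).
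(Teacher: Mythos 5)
Your proof is correct, but it runs in the opposite direction from the paper's. The paper exploits the \emph{upper} bound on the dynamics: since $\overline{\eta}<1$ and $\overline{y}<\infty$, wealth starting from $w_0$ satisfies $w_n \leq \overline{\eta}^n w_0 + \overline{y}(1-\overline{\eta}^n)/(1-\overline{\eta})$, so some half-line $(M,\infty)$ is never charged; you instead exploit the \emph{lower} bound $\underline{y}=1>0$, so that $P_\sigma(w,[1,\infty))=1$ for every $w$ and the open set $(0,1)$ is never charged after the first step. Both arguments produce a nonempty open set of zero hitting probability and are equally short; the paper's version has the advantage of working even when $\underline{y}=0$ (the general parameter range $0\leq\underline{y}$ allowed earlier in the section), whereas yours needs the specific normalization $\underline{y}=1$ adopted for this subsection, and the paper's bound is also uniform in the policy for the same reason yours is (only feasibility $w-\sigma(w)\geq 0$ is used). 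One point where your write-up is actually cleaner: you correctly deduce failure of weak irreducibility from failure of open set irreducibility via the contrapositive of Lemma~\ref{l:siwi}(b), which is the logically valid direction of that reduction; the paper states the reduction the other way around (claiming it suffices to disprove weak irreducibility), and only recovers the open-set conclusion because its witness set $(M,\infty)$ happens to be open. Your handling of the $n=0$ case by choosing the starting point outside $G$ is a sensible precaution, though immaterial to the conclusion.
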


\begin{proof}
    Fix $\sigma \in \Sigma$. In view of Lemma~\ref{l:siwi}, we only 
    need to show that $P_\sigma$ is not weakly irreducible.
    To this end, let $\pi$ be a distribution on $\Wsf$ that assigns positive
    probability to open sets. Fix initial wealth $w_0 \in \RR_+$. For any $t$-step
    transition, let $\alpha_t \coloneq w_t - \sigma(w_t)$ be savings at step $t$.
    Then
    $$
        w_{t+1} \leq \overline{\eta}\alpha_t + \overline{y}
        \leq \overline{\eta}w_t + \overline{y}.
    $$
    Iterating this inequality $n$ times from $w_0$
    $$
        w_n \leq \overline{\eta}^n w_0 + \overline{y}(1 + \overline{\eta} + \dots +
        \overline{\eta}^{n-1}) = \overline{\eta}^n w_0 +
        \overline{y}\frac{1-\overline{\eta}^n}{1-\overline{\eta}}.
    $$
    Hence, there exists an $M \in \RR_+$ such that
    $$
        w_n < \overline{\eta} w_0 + \overline{y} \frac{1}{1-\overline{\eta}} < M
        \quad \forall n \in \NN.
    $$
    Let $B = (M, \infty)$. Then $\pi(B) > 0$, and
    $P^n_\sigma(w_0, B) = 0$ for all $n \in \NN$.  This shows that $P_\sigma$ is
    not weakly irreducible.
\end{proof}

\begin{figure}[ht]
    \centering
    \includegraphics[width=0.8\textwidth]{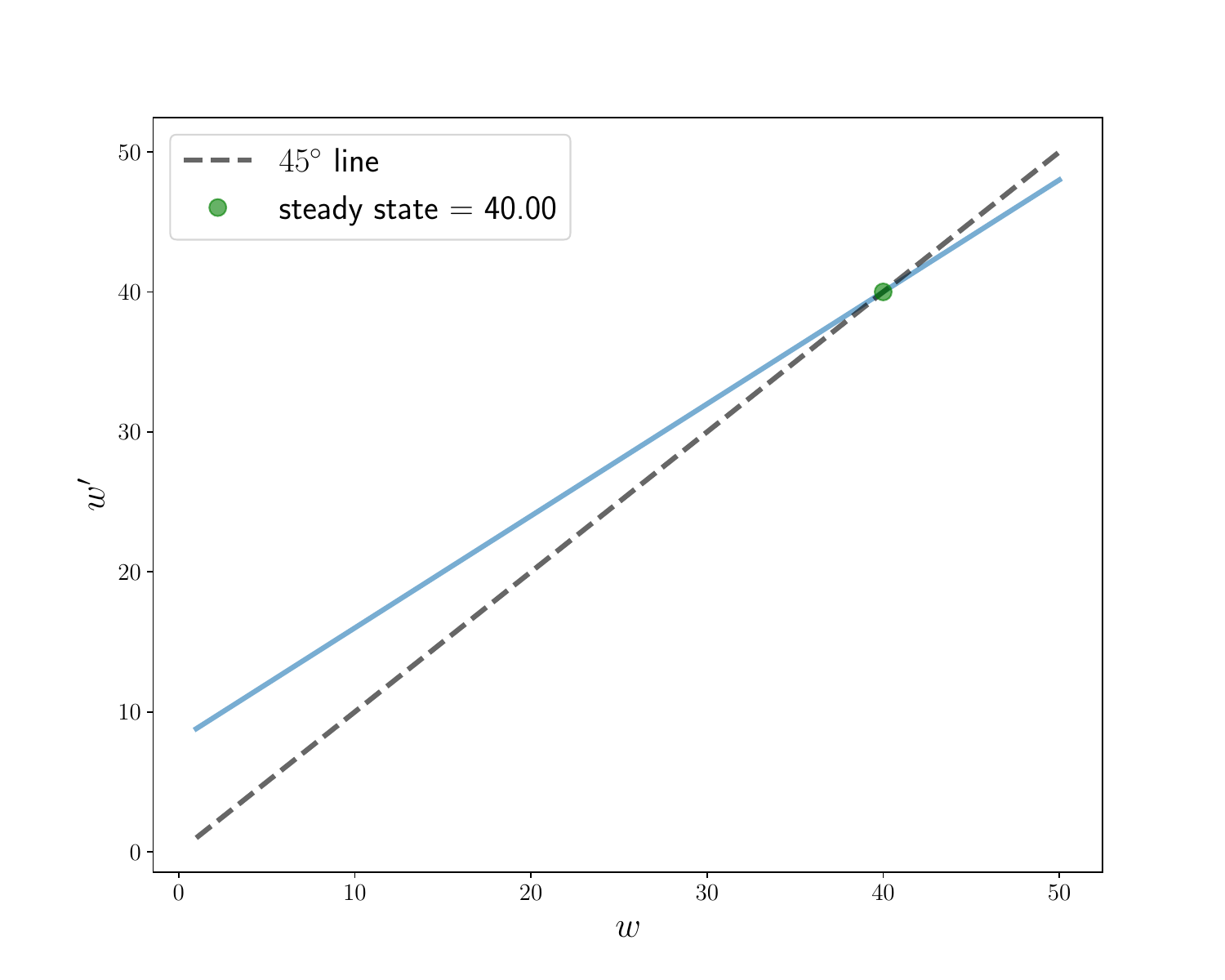}
    \caption{The upper bound law of motion for wealth}
    \label{fig:lom}
\end{figure}

The proof of Proposition~\ref{p:uc} is partly illustrated in 
Figure~\ref{fig:lom}.  The figure shows the 45 degree line and an upper bound law of motion
for wealth, obtained by setting consumption to zero and both shocks to their
upper bound.  States above the steady state are not reachable from states below
the steady state.

\begin{figure}[ht]
    \centering
    \includegraphics[width=\textwidth]{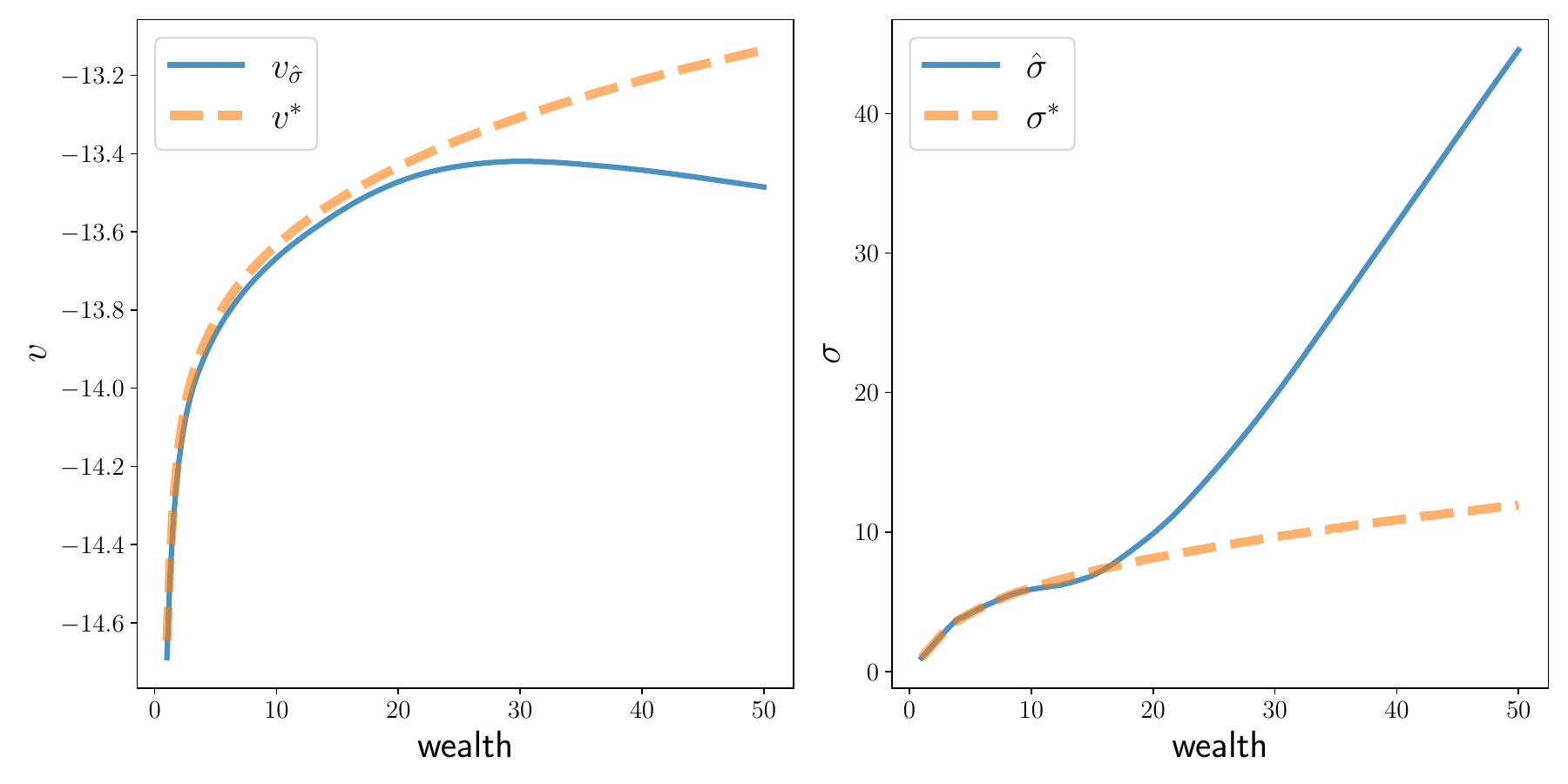}
    \caption{$\hat v_\sigma$ and $\hat \sigma$ with $\bar w = 1$ vs OPI solutions}
    \label{fig:dpg_vfi_red}

    \vspace{2em}

    \centering
    \includegraphics[width=0.75\textwidth]{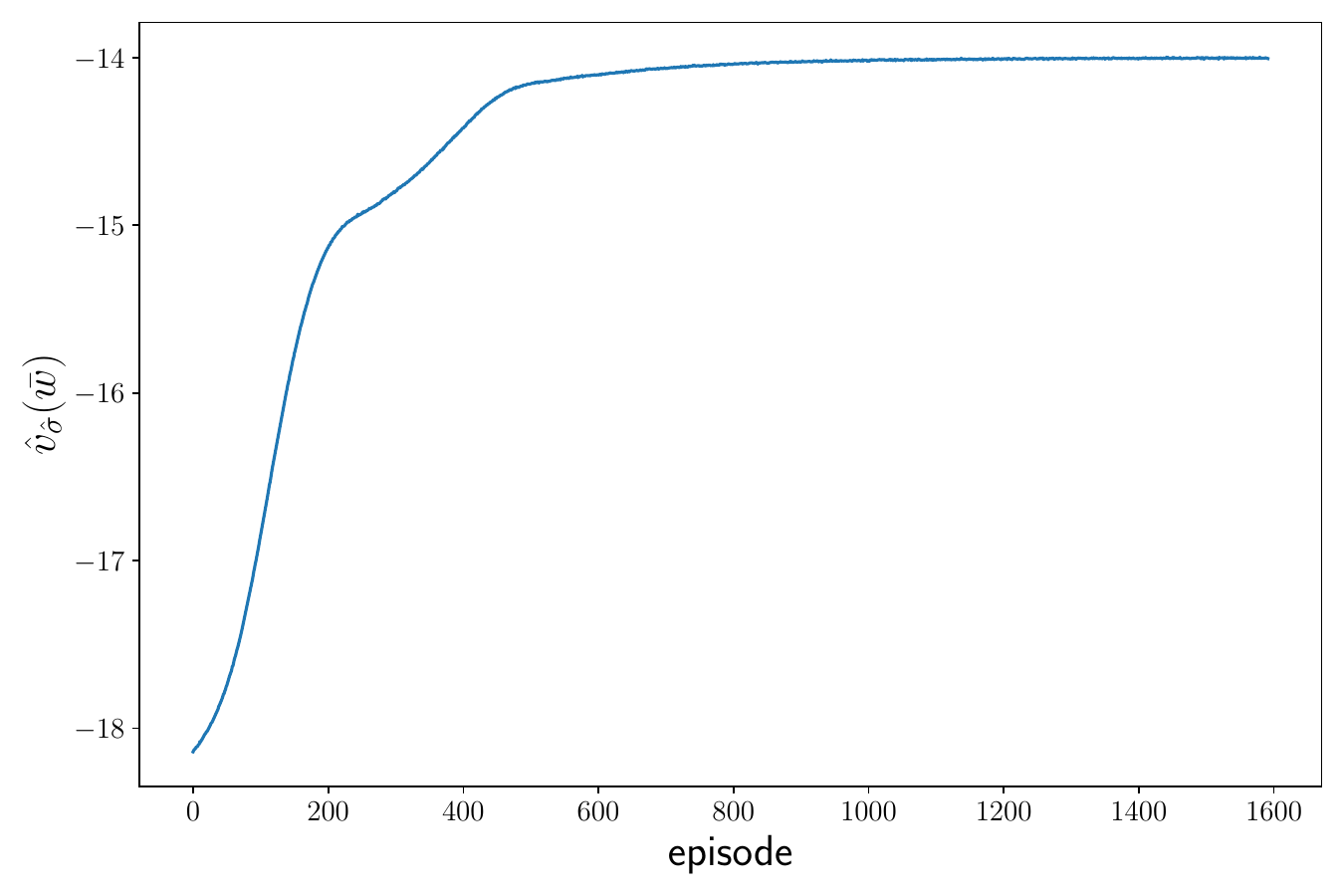}
    \caption{$\hat v_{\hat \sigma}(\bar w)$ over training episode for
    the reducible optimal saving model at $\bar w = 1$.}
    \label{fig:red_training}
\end{figure}
For reducible MDPs, Algorithm~\ref{alg:dpvi} no longer guarantees convergence to 
the optimal policy when optimized at a single initial state $\bar w \in \Wsf$. 
This limitation is clearly demonstrated in Figure~\ref{fig:dpg_vfi_red}, 
which shows significant discrepancies between the algorithm's output and the OPI 
solution. Figure~\ref{fig:red_training} confirms this limitation, 
showing that extended training episodes fail to improve the policy's 
performance.\footnote{The training episode in Figure~\ref{fig:red_training} 
extends longer than that in Figure~\ref{fig:irr_training} because we 
implement early stopping with a patience parameter of 150 episodes, meaning 
training terminates after 150 consecutive episodes without improvement in 
the loss.}

Figure~\ref{fig:dpg_vfi_red} provides empirical support for 
Lemma~\ref{l:bk_access}. The policy network achieves near-optimal 
performance at lower wealth levels because these levels are reachable 
from the initial state under the learned policy $\hat \sigma$. 
However, the performance deteriorates for wealth levels that are not reachable 
from the initial state given the policy $\hat \sigma$. (Reachable 
range of wealth level in the graph is smaller than in 
Figure~\ref{fig:lom} because $\hat \sigma(w) > 0$ for all $w \in \Wsf$.)
We also include the result for $\bar w = 50$ in Appendix~\ref{app:red_50} to 
show that the same phenomenon related to the reachable subset occurs for larger 
initial wealth levels. 

These results confirm that, when irreducibility fails, the policy $\hat \sigma$
computed to maximize local optimality is not guaranteed to be optimal, and the
performance of the policy is sensitive to the initial state and the reachable
subset of the state space starting from the initial state.

\subsection{Firm Entry}\label{ss:fen}

The next example provides an extension showing how the results in the paper can 
be used outside the MDP setting.  This is significant because many recent
examples of dynamic programs are not MDPs (see, e.g.,
\cite{sargent2025dynamic}).  The example concerns an optimal stopping problem
with state-dependent discounting.  We prove that, under the assumptions stated
below, local optimality implies global optimality.

Specifically, we consider a stopping problem for a firm choosing if and when to enter a
market.  Stopping corresponds to entering the market and continuing 
corresponds to waiting until the next period and then deciding again.  If the firm enters at time $t$ it
receives one-off profit $\pi(X_t)$, where $(X_t)_{t \geq 0}$ is a Markov
sequence on possibly unbounded interval $\Xsf \subset \RR$ with transition
kernel $Q$.  If the firm continues then it pays a fixed cost $c$. We admit the
possibility that discounting is not constant. (This allows for the fact that 
firms evaluate future profit opportunities based on cost-of-capital,
which fluctuates over time.)  Thus, the Bellman equation for the firm is
\begin{equation}\label{eq:fbe}
    v(x) = \max
    \left\{
        \pi(x) , \; 
        -c + \beta(x) \int v(x') Q(x, \diff x')
    \right\},
\end{equation}
where $\beta(x) \in (0, \infty)$ is the discount factor associated with
cost-of-capital in state $x$.  (Discount factor realizations above one are
admitted in order to accommodate occasionally negative interest rates.)

Let $ib\Xsf$ be the increasing functions in $b\Xsf$ and let $icb\Xsf$ be the
continuous functions in $ib\Xsf$.  We assume that the discount operator $K$ defined by
\begin{equation*}
    (Kf)(x) = \beta(x) \int f(x') Q(x, \diff x')
    \qquad (x \in \Xsf, \; f \in b\Xsf)
\end{equation*}
preserves monotonicity and continuity, in the sense that $K$ is invariant on
both $ib\Xsf$ and $icb\Xsf$, and that $r(K) < 1$, where
$r(K)$ is the spectral radius of $K$. For simplicity, we also assume that $Q$
always has full support on $\RR$, so that $Q(x, B) > 0$ for any Borel set $B
\subset \RR$ with
positive Lebesgue measure.  (For example, $(X_t)$ might be driven by an AR(1)
sequence with normal shocks.)  Finally, we assume that the profit function
$\pi$ is bounded, increasing, and continuous.

A policy is a map $\sigma$ from $\Xsf$ to $\{0,1\}$, with $\sigma(x)=1$
indicating the decision to stop. The lifetime value $v_\sigma$ of policy $\sigma$ 
is the unique fixed point of the operator $T_\sigma$ on $b\Xsf$ given by
\begin{equation}\label{eq:tsd}
    (T_\sigma \, v)(x) = \sigma(x) \pi(x) + (1-\sigma(x))
    \left[ -c + \beta(x) \int v(x')Q(x, \diff x') \right].
\end{equation}
We let $\Sigma$ be the set of all policies $\sigma$ such that $T_\sigma$ is
invariant on $ib\Xsf$. 

As for the MDP case, the \navy{value function} is denoted $v^*$ and defined at
each $x \in \Xsf$ by $v^*(x) \coloneq \sup_{\sigma \in \Sigma} v_\sigma(x)$. A
policy $\sigma$ is called \navy{optimal} if $v_\sigma(x) = v^*(x)$ for all $x
\in \Xsf$.  In the proofs below we will make use of the Bellman operator for
this problem, which takes the form
\begin{equation*}
    (Tv)(x) = \max
    \left\{
        \pi(x) , \; 
        -c + \beta(x) \int v(x') Q(x, \diff x')
    \right\}
    \qquad (x \in \Xsf, \; v \in b\Xsf).
\end{equation*}

The model has standard optimality properties, which we detail in the theorem
below.

\begin{theorem}\label{t:ost}
    Under the stated assumptions, the following results hold:
    \begin{enumerate}
        \item The value function is the unique solution to the Bellman equation
            in $b\Xsf$,
        \item the value function is increasing and continuous, and
        \item at least one optimal policy exists.
    \end{enumerate}
\end{theorem}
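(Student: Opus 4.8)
The plan is to adapt the contraction-based argument behind Proposition~\ref{p:ff} to the present setting, replacing the constant discount factor by the discount operator $K$ and the condition $\beta<1$ by $r(K)<1$. For claim (a) I would first show that the Bellman operator $T$ is an eventual contraction on $b\Xsf$. Using the elementary inequality $|\max\{a,s\}-\max\{a,t\}|\leq|s-t|$ together with positivity of $K$, one obtains the pointwise bound $|Tv-Tw|\leq K|v-w|$ for all $v,w\in b\Xsf$, and hence $|T^n v-T^n w|\leq K^n|v-w|$ by induction. Taking norms gives $\|T^n v-T^n w\|\leq\|K^n\|\,\|v-w\|$, and since $r(K)<1$ the Gelfand formula $\|K^n\|^{1/n}\to r(K)$ yields $\|K^{n_0}\|<1$ for some $n_0\in\NN$, so $T^{n_0}$ is a contraction. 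The standard consequence that a map one of whose iterates is a contraction has a unique fixed point then delivers a unique fixed point $\bar v\in b\Xsf$ of $T$; the same spectral-radius reasoning applied to each $T_\sigma$ confirms that every $v_\sigma$ is well defined.

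For claim (b) I would show that $T$ maps $icb\Xsf$ into itself. Indeed $\pi\in icb\Xsf$ by assumption, $K$ leaves $icb\Xsf$ invariant, so $-c+Kv\in icb\Xsf$ whenever $v\in icb\Xsf$, and the pointwise maximum of two increasing, continuous, bounded functions is again increasing, continuous, and bounded; thus $Tv=\max\{\pi,-c+Kv\}\in icb\Xsf$. Because uniform limits preserve both monotonicity and continuity, $icb\Xsf$ is a closed subset of $(b\Xsf,\|\cdot\|)$, and since $\bar v=\lim_n T^n w$ for any $w\in icb\Xsf$, we conclude $\bar v\in icb\Xsf$, which is claim (b).

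For claim (c) I would show simultaneously that $\bar v$ equals the value function and that an optimal policy exists. One direction is routine: for every $\sigma\in\Sigma$ we have $T_\sigma v\leq Tv$ for all $v$, since $T$ takes the pointwise maximum over the two actions, so order-preservation of $T$ gives $v_\sigma=T_\sigma^n v_\sigma\leq T^n v_\sigma$ exactly as in~\eqref{eq:vsvs}, and letting $n\to\infty$ yields $v_\sigma\leq\bar v$; hence $v^*\leq\bar v$. For the reverse direction I would define the greedy policy $\sigma^*$ by setting $\sigma^*(x)=1$ precisely when $\pi(x)\geq-c+(K\bar v)(x)$. By construction $T_{\sigma^*}\bar v=T\bar v=\bar v$, so $\bar v$ is a fixed point of $T_{\sigma^*}$; by the same eventual-contraction argument $T_{\sigma^*}$ has a unique fixed point $v_{\sigma^*}$, whence $v_{\sigma^*}=\bar v$. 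Combined with $v^*\leq\bar v$ this gives $\bar v=v_{\sigma^*}\leq v^*\leq\bar v$, so $v^*=\bar v$ is the value function and $\sigma^*$ is optimal.

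The main obstacle is to verify that the constructed optimal policy actually lies in $\Sigma$, i.e.\ that $T_{\sigma^*}$ is invariant on $ib\Xsf$. This is the step where the monotone structure of the problem must genuinely be exploited: since $\bar v$, $\pi$, and $K\bar v$ are all increasing, the greedy rule has a threshold character, and one must check that $T_{\sigma^*}$ carries an \emph{arbitrary} increasing $v$ (not merely the fixed point $\bar v$) back into an increasing function. I expect this verification, rather than the fixed-point analysis, to concentrate the real work, and to depend on the increasing and continuous structure of $\pi$ and $\bar v$ together with the assumed full support of $Q$.
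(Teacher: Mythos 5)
Your route is genuinely different from the paper's: you build everything from the eventual-contraction estimate $|T^n v - T^n w| \leq K^n|v-w|$ plus the Gelfand formula, and then try to close the loop with a greedy policy, whereas the paper delegates existence and uniqueness to Theorem~5.5 of \cite{sargent2025partially} for the abstract dynamic program $(b\Xsf, \setntn{T_\sigma}{\sigma \in \Sigma})$, verifying order continuity, countable Dedekind completeness, order stability of each $T_\sigma$ (via Lemma~\ref{l:tsi}), and boundedness above by the fixed point of $\hat T v = |\pi| + Kv$. Your treatment of claim (b) -- invariance of $T$ on $icb\Xsf$, closedness of $icb\Xsf$ under uniform limits, global stability of $T$ -- is essentially identical to the paper's and is fine, as is the inequality $v^* \leq \bar v$.

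However, the step you flag as ``the main obstacle'' is a genuine gap, not a routine verification, and as stated your construction does not close it. The paper defines $\Sigma$ as the set of policies for which $T_\sigma$ is invariant on $ib\Xsf$, so your greedy policy $\sigma^*$ is only admissible if $T_{\sigma^*}$ maps \emph{every} increasing $v \in b\Xsf$ to an increasing function. Write $S = \{x : \pi(x) \geq -c + (K\bar v)(x)\}$ for the stopping region. If $x < y$ with $x \in S$ and $y \notin S$, monotonicity of $T_{\sigma^*}v$ requires $\pi(x) \leq -c + (Kv)(y)$; what the definition of $S$ gives you is $\pi(x) \leq \pi(y) < -c + (K\bar v)(y)$, which only yields the needed inequality when $(Kv)(y) \geq (K\bar v)(y)$. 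For arbitrary increasing $v$ (e.g.\ a large negative constant) this fails, and the symmetric mixed case $x \notin S$, $y \in S$ fails for large positive constants. Nor does $S$ have an obvious threshold structure, since $\pi - K\bar v$ is a difference of increasing functions. So without this step you have not shown $\sigma^* \in \Sigma$, and therefore you have not established $\bar v \leq v^*$; since claims (a) and (b) are assertions about $v^*$ rather than about the fixed point $\bar v$ of $T$, the gap propagates to all three claims. You would need either a different argument for $\bar v \leq v^*$ (for instance a finite-horizon or $\epsilon$-optimal-policy approximation within $\Sigma$, noting that the two constant policies are in $\Sigma$), or an appeal to an abstract existence theorem of the kind the paper actually uses.
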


We will prove in addition the following result, which holds for any $\sigma \in
\Sigma$:

\begin{theorem}\label{t:oslbk}
    If there exists an $x \in \Xsf$ such that $v_\sigma(x) = v^*(x)$, then $\sigma$ is optimal.
\end{theorem}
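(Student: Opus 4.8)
The plan is to set $h = v^* - v_\sigma$, note that $h \ge 0$ because $v^*$ dominates every $v_s$, and show that agreement at the single point $x_0$ forces $h \equiv 0$. I would first exploit the affine structure of the policy operator. Writing $(T_\sigma v)(x) = r_\sigma(x) + (L_\sigma v)(x)$, where $r_\sigma(x) = \sigma(x)\pi(x) - (1 - \sigma(x)) c$ and $(L_\sigma v)(x) = (1 - \sigma(x)) (Kv)(x)$, the operator $L_\sigma$ is positive, and since $0 \le 1 - \sigma \le 1$ one has $|L_\sigma^n v| \le K^n |v|$, so $r(L_\sigma) \le r(K) < 1$ and $L_\sigma$ is a contraction with fixed-point expansion $v_\sigma = \sum_{n \ge 0} L_\sigma^n r_\sigma$. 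Exactly as in Lemma~\ref{l:e1}, I would then establish the chain $v_\sigma = T_\sigma^n v_\sigma \le T_\sigma^n v^* \le T^n v^* = v^*$, using that $T_\sigma$ and $T$ are order preserving (here $\beta(x) > 0$ and $Q$ is a positive kernel), that $T_\sigma w \le T w$ for all $w$, and that $v_\sigma$ is the fixed point of $T_\sigma$ by definition while $v^*$ is the fixed point of $T$ (Theorem~\ref{t:ost}). Since $T_\sigma^n v_\sigma - T_\sigma^n v^* = - L_\sigma^n h$, evaluating at $x_0$, where the whole chain collapses to a single value, gives $(L_\sigma^n h)(x_0) = 0$ for every $n \in \NN$.

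The second step turns this into a statement about $K$ and then calls on full support. If $\sigma(x_0) = 0$, the case $n = 1$ reads $(Kh)(x_0) = \beta(x_0) \int h(x') Q(x_0, \diff x') = 0$; since $h \ge 0$, $\beta(x_0) > 0$, and $Q(x_0, \cdot)$ charges every set of positive Lebesgue measure, $h$ vanishes Lebesgue-almost everywhere. I would upgrade this to $h \equiv 0$ by monotonicity: $\sigma \in \Sigma$ gives $v_\sigma \in ib\Xsf$, so $v_\sigma$ is increasing, while $v^*$ is continuous (Theorem~\ref{t:ost}). Choosing $x_k \uparrow x$ inside the full-measure set $\{h = 0\}$, monotonicity and continuity give $v_\sigma(x) \ge \lim_k v_\sigma(x_k) = \lim_k v^*(x_k) = v^*(x)$, and with $v_\sigma \le v^*$ this forces $h(x) = 0$. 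This disposes of every case in which $\sigma$ prescribes continuation at the locally optimal state.

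The main obstacle is the complementary case $\sigma(x_0) = 1$, in which $x_0$ sits in the stopping region of $\sigma$. Here the identity $(L_\sigma^n h)(x_0) = 0$ is vacuous, the factor $1 - \sigma(x_0)$ annihilating it; intuitively the state is absorbed the moment $\sigma$ stops, so nothing propagates through $L_\sigma$. All the hypothesis delivers is $v^*(x_0) = \pi(x_0)$, together with $-c + (Kv^*)(x_0) \le \pi(x_0)$. To recover global optimality I would fall back on the topology of the problem: $\Xsf$ is an interval, hence connected, $\pi$ and $v^*$ are increasing and continuous, and $Q$ has full support. The plan is to show that the agreement set $S = \{x : v_\sigma(x) = v^*(x)\}$ is clopen in $\Xsf$ and therefore all of it. Closedness would follow from the regularity of $h$; openness I would obtain by combining the continuation-case propagation above with the Bellman relation $v^* = \max\{\pi, -c + Kv^*\}$ to exclude $h > 0$ immediately beside a point of $S$. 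Establishing openness at a stopping point is the delicate step, since local optimality is weakest precisely there, and it is where full support of $Q$ together with the monotone, continuous structure of the primitives must carry the argument.
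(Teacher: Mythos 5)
Your treatment of the continuation case is essentially the paper's own argument, split across its Lemma~\ref{l:vmx} and Lemma~\ref{l:cnc}: the chain $v_\sigma = T_\sigma^n v_\sigma \leq T_\sigma^n v^* \leq T^n v^* = v^*$ collapses at $x_0$, the full-support assumption on $Q(x_0,\cdot)$ turns $\int (v^*-v_\sigma)\,Q(x_0,\diff x')=0$ into $v_\sigma = v^*$ Lebesgue-almost everywhere, and monotonicity of $v_\sigma$ (from invariance of $T_\sigma$ on $ib\Xsf$) together with continuity of $v^*$ (Theorem~\ref{t:ost}) upgrades almost-everywhere equality to equality everywhere. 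Your contraction bound $r(L_\sigma)\leq r(K)<1$ likewise mirrors the paper's Lemma~\ref{l:tsi}. Up to this point the proposal is correct and on the paper's route.

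The gap is the one you yourself flag: the case $\sigma(x_0)=1$ is left unresolved, and the clopen argument you sketch cannot be completed, because the difficulty is not topological but substantive. When $\sigma(x_0)=1$, the identity $(T_\sigma v_\sigma)(x_0)=(T_\sigma v^*)(x_0)$ reads $\pi(x_0)=\pi(x_0)$ and carries no information; the hypothesis then only asserts that stopping is optimal at $x_0$. Concretely, take $\sigma\equiv 1$ (enter immediately in every state), which lies in $\Sigma$ since $T_\sigma v=\pi\in ib\Xsf$. Then $v_\sigma=\pi$, and at any $x_0$ in the optimal stopping region one has $v_\sigma(x_0)=\pi(x_0)=v^*(x_0)$, yet $v_\sigma<v^*$ on any nonempty continuation region. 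So no openness argument at a stopping point can succeed: the implication genuinely requires $\sigma(x_0)=0$ (or that $x_0$ lead into the continuation region of $\sigma$). It is worth noting that you have in fact located a step the paper glosses over: its proof of Lemma~\ref{l:vmx} passes from $(T_\sigma v_\sigma)(x)=(T_\sigma v^*)(x)$ to $\beta(x)\int v^*\,Q(x,\diff x')=\beta(x)\int v_\sigma\,Q(x,\diff x')$ by cancelling the factor $1-\sigma(x)$ in \eqref{eq:tsd}, which is legitimate only when $\sigma(x)=0$. So your ``main obstacle'' is not an artifact of your approach --- it is exactly where the paper's argument also stalls, and the honest resolution is to add the hypothesis $\sigma(x_0)=0$ rather than to search for a connectedness argument.
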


We begin the proof of Theorem~\ref{t:ost} and Theorem~\ref{t:oslbk} 
with a series of lemmas.  In what follows, an operator $S$ on $b\Xsf$ is called
\navy{globally stable} when $S$ has a unique fixed point $\bar v$ in $b\Xsf$ and
$S^n v \to \bar v$ as $n\to \infty$ for all $v \in b\Xsf$.

\begin{lemma}\label{l:tsi}
    For each $\sigma \in \Sigma$, the operator $T_\sigma$ is globally stable on $b\Xsf$.
\end{lemma}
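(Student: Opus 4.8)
The plan is to show that $T_\sigma$ is a contraction-type map whose iterates converge to a unique fixed point, using the structure of the operator and the assumption $r(K) < 1$.

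The plan is to realize $T_\sigma$ as an affine map driven by a positive linear operator that is pointwise dominated by $K$, and then invoke the geometric series lemma exactly as in Section~\ref{ss:mdp}. First I would split off the linear part of \eqref{eq:tsd}. Setting $r_\sigma(x) \coloneq \sigma(x)\pi(x) - (1-\sigma(x))c$ and defining $K_\sigma$ on $b\Xsf$ by $(K_\sigma v)(x) = (1-\sigma(x))(Kv)(x)$, the definition becomes $T_\sigma v = r_\sigma + K_\sigma v$. Here $r_\sigma \in b\Xsf$ because $\pi$ is bounded and $c$ is constant, while $K_\sigma$ maps $b\Xsf$ into itself and is bounded, since $0 \leq 1-\sigma \leq 1$ gives $|(K_\sigma v)(x)| \leq |(Kv)(x)|$ and hence $\|K_\sigma v\| \leq \|Kv\| \leq \|K\|\,\|v\|$.

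The key step is to bound the spectral radius of $K_\sigma$. Because $\beta > 0$ and $Q$ is a probability kernel, $K$ is a positive operator, and since $0 \leq 1-\sigma(x) \leq 1$ the operator $K_\sigma$ is positive as well, with $0 \leq K_\sigma v \leq Kv$ for every $v \in b\Xsf_+$. Iterating this domination, and using that positive operators are order preserving, yields $0 \leq K_\sigma^n v \leq K^n v$ for all $v \in b\Xsf_+$ and all $n \in \NN$. Applying this to $|v|$ and using the pointwise bound $|K_\sigma^n v| \leq K_\sigma^n |v| \leq K^n |v|$, then taking suprema, I obtain $\|K_\sigma^n\| \leq \|K^n\|$. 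Gelfand's formula $r(S) = \lim_n \|S^n\|^{1/n}$ now delivers $r(K_\sigma) \leq r(K) < 1$.

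With $r(K_\sigma) < 1$ established, the geometric series lemma applies verbatim: the affine equation $v = r_\sigma + K_\sigma v$ has the unique solution $\bar v = \sum_{n=0}^\infty K_\sigma^n r_\sigma$ in $b\Xsf$, the series converging because $r(K_\sigma) < 1$. Iterating $T_\sigma$ gives $T_\sigma^n v = \sum_{k=0}^{n-1} K_\sigma^k r_\sigma + K_\sigma^n v$ for every $v \in b\Xsf$, and since $\|K_\sigma^n\| \to 0$ we have $K_\sigma^n v \to 0$, so $T_\sigma^n v \to \bar v$. This is precisely global stability.

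The hard part will be the spectral radius comparison $r(K_\sigma) \leq r(K)$. One cannot read off a contraction constant from a single-step estimate, because $\beta(x)$ is only assumed to lie in $(0,\infty)$ and may exceed one; the bound therefore has to come from pointwise domination of the \emph{iterates} of $K_\sigma$ by those of $K$, which is exactly what reduces the whole lemma to the standing hypothesis $r(K) < 1$. Everything downstream is the routine Neumann-series argument.
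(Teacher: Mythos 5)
Your proof is correct, and it takes a somewhat different route from the paper's. The paper does not decompose $T_\sigma$ into an affine map; instead it establishes the single pointwise domination $|T_\sigma v - T_\sigma w| \leq |K(v-w)| \leq K|v-w|$ and then cites a general result for Banach lattices (Proposition~4.1 and Theorem~2.1 of the referenced work on dynamic programming with state-dependent discounting) to conclude that some power $T_\sigma^N$ is a contraction, after which the Banach fixed point theorem finishes the job. Your argument makes the affine structure $T_\sigma v = r_\sigma + K_\sigma v$ explicit, proves the iterate domination $0 \leq K_\sigma^n v \leq K^n v$ on the positive cone by hand, passes to $r(K_\sigma) \leq r(K) < 1$ via Gelfand's formula, and closes with the Neumann series — exactly the geometric series lemma already invoked in Section~\ref{ss:mdp} for the constant-discount MDP. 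The two proofs rest on the same underlying estimate (iterates of the dominated object are controlled by iterates of $K$), but yours is self-contained and avoids the external citation, at the cost of being specific to affine policy operators; the paper's version is shorter and would survive even if $T_\sigma$ were a nonlinear map satisfying the same Lipschitz-type domination. One small point worth making explicit in your write-up: the step $|K_\sigma^n v| \leq K_\sigma^n|v|$ follows from positivity of $K_\sigma^n$ applied to $-|v| \leq v \leq |v|$, and the convergence $\|K_\sigma^n\| \to 0$ follows from $r(K_\sigma)<1$ because $\|K_\sigma^n\|^{1/n} \to r(K_\sigma)$ forces $\|K_\sigma^n\| \leq q^n$ eventually for any $q \in (r(K_\sigma),1)$. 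With those details spelled out the argument is complete.
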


\begin{proof}
   Fix $\sigma\in \Sigma$ and $v,w\in b\Xsf$. Applying the triangle inequality,
   combined with the fact that $K$ is a positive linear operator, we have
   $|T_\sigma \, v-T_\sigma \, w| \leq |K(v-w)|\leq K|v-w|$. Since $\rho (K)<1$
   and $b\Xsf$ is a Banach lattice, by Proposition~4.1 and Theorem~2.1 in 
    \cite{stachurski2021dynamic}, there exists some $N \in \NN$ such that $T_\sigma^N$
    is a contraction. By Banach fixed point theorem, $T_\sigma^N$ is globally stable and 
    it follows that $T_\sigma$ is globally stable on $b\Xsf$. 
\end{proof}

The next proof uses several concepts from \cite{sargent2025partially}.  We refer
to that paper for the definitions, which are omitted here for brevity.

\begin{proof}[Proof of Theorem~\ref{t:ost}]
    To prove Theorem~\ref{t:ost} we apply Theorem~5.5 of
    \cite{sargent2025partially}, which applies to the abstract dynamic program
    $(b\Xsf, \TT) \coloneq (b\Xsf, \setntn{T_\sigma}{\sigma \in \Sigma})$. First, in $b\Xsf$ 
    with its usual partial order,
    the statement $v_n \uparrow v$ is equivalent to $v_n(x) \uparrow v(x)$ in
    $\RR$ for each $x \in \Xsf$.  From this fact and the monotone
	convergence theorem, we obtain $Tv_n\uparrow Tv$ whenever $(v_n)\subset
    b\Xsf$ and $v_n\uparrow v\in b\Xsf$. Thus, $(b\Xsf, \TT)$ is order continuous.  
    Also, $b\Xsf$ is countably-Dedekind
    complete because the set of bounded Borel measurable functions is closed
    under pointwise convergence.  In addition, each $T_\sigma$ is globally
    stable (Lemma~\ref{l:tsi}) and hence order stable (see \cite{sargent2025partially}, Example~2.1).
    Finally, let $\hat T$ be the operator given by $\hat T v= |\pi| +Kv$. 
    Since $\rho(K)<1$, there is a $u\in b\Xsf_+$ with $\hat T u=u$. 
    For this $u$ we have $T_\sigma \, u \leq \hat T u \leq u$, so $(b\Xsf, \TT)$ is
    bounded above.  It now follows from Theorem~5.5 of \cite{sargent2025partially}
    that the value function is the unique solution
    to the Bellman equation and at least one optimal policy exists. 
	
    To prove the final claim we first note that $T$ is globally stable on
    $b\Xsf$, since, given $v, w \in b\Xsf$, we have
    $|T v-T w| \leq |K(v-w)|\leq K|v-w|$, so the argument in the proof of
    Lemma~\ref{l:tsi} applies. Now fix $v\in icb\Xsf$. Since $K$ is invariant on
    $icb\Xsf$,  the element $-c+Kv$ is in $icb\Xsf$. Moreover, $\pi\in icb\Xsf$ 
    and so $\max\{\pi,-c+Kv\} \in icb\Xsf$. This shows that $T$ is invariant on $icb\Xsf$.  
    Since $icb\Xsf$ is nonempty and closed under uniform limits, and $T$ is globally 
    stable on $b\Xsf$, we conclude that $v^*\in icb\Xsf$.  The proof of 
    Theorem~\ref{t:ost} is now done.
\end{proof}

\begin{lemma}\label{l:vmx}
    If $v_\sigma(x) = v^*(x)$ for some 
    $x \in \Xsf$, then $v_\sigma = v^*$ almost everywhere.
\end{lemma}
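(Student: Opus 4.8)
The plan is to mirror the argument behind Lemma~\ref{l:e1}, replacing the Markov operator $P_\sigma$ by the sub-Markov ``continuation operator'' sitting inside $T_\sigma$, and then to convert the resulting null statement into a Lebesgue-almost-everywhere statement using the full support of $Q$. Write $h \coloneq v^* - v_\sigma$, so $h \geq 0$ by definition of $v^*$; the goal is $h = 0$ almost everywhere. The crucial observation is that $T_\sigma$ is affine: $T_\sigma v = a_\sigma + L_\sigma v$, where $a_\sigma \coloneq \sigma \pi - (1-\sigma)c$ and $L_\sigma$ is the positive linear operator $(L_\sigma f)(x) = (1-\sigma(x))\,\beta(x)\int f(x')\,Q(x,\diff x')$, the linear part of $T_\sigma$. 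Note $L_\sigma \geq 0$ and, since $L_\sigma \leq K$ on nonnegative functions and $r(K) < 1$, the series $\sum_n L_\sigma^n h$ converges.

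First I would establish the exact analogue of \eqref{eq:pn}. Iterating the affine identity cancels the constant terms in the difference, giving $T_\sigma^n v^* - T_\sigma^n v_\sigma = L_\sigma^n h$ for every $n \in \NN$. Exactly as in \eqref{eq:vsvs}, the operator $T_\sigma$ is order preserving (because $L_\sigma \geq 0$) and satisfies $T_\sigma v \leq Tv$ for all $v$, so
\[
    v_\sigma = T_\sigma^n v_\sigma \leq T_\sigma^n v^* \leq T^n v^* = v^*,
\]
using that $v^*$ is the fixed point of the Bellman operator $T$. Evaluating at the point $x$ where $v_\sigma(x) = v^*(x)$ collapses this chain to an equality, so $(T_\sigma^n v^*)(x) = (T_\sigma^n v_\sigma)(x)$ and hence $(L_\sigma^n h)(x) = 0$ for every $n$. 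As each term is nonnegative, $\sum_{n}(L_\sigma^n h)(x) = 0$; equivalently $\int h \, \diff\nu = 0$, where $\nu \coloneq \sum_{n} L_\sigma^n(x,\cdot)$ is the discounted occupation measure of the continuation dynamics started at $x$. Since $h \geq 0$, this forces $h = 0$ $\nu$-almost everywhere.

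The key new step, relative to the MDP case, is upgrading ``$\nu$-a.e.'' to ``Lebesgue-a.e.''. Here I would invoke the full-support hypothesis on $Q$. When $\sigma$ prescribes continuation at $x$, i.e. $\sigma(x)=0$, the $n=1$ term alone yields $\beta(x)\int h \, \diff Q(x,\cdot)=0$, hence $\int h \, \diff Q(x,\cdot)=0$. Because $h \geq 0$ and $Q(x,\cdot)$ charges every Borel set of positive Lebesgue measure, the set $\{h>0\}$ must be Lebesgue-null, so $v_\sigma = v^*$ Lebesgue-almost everywhere. (Equivalently, $\nu$ dominates $\beta(x)Q(x,\cdot)$ and therefore inherits full support, so every $\nu$-null set is Lebesgue-null.)

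The step I expect to be the main obstacle is the degenerate case where $\sigma$ \emph{stops} at $x$, i.e. $\sigma(x)=1$. Then $L_\sigma^n(x,\cdot)=0$ for all $n\geq 1$, the occupation measure collapses to $\nu=\delta_x$, and the full-support propagation above yields nothing beyond $h(x)=0$. To handle this case I would bring in the regularity from Theorem~\ref{t:ost}: $v^*$ is increasing and continuous, as is $v_\sigma$ (via global stability of $T_\sigma$ together with invariance on the closed set $icb\Xsf$, in the spirit of the proof of Lemma~\ref{l:bk_access}), so $h$ is continuous. Combined with the greedy identity $(v^*-T_\sigma v^*)(x)=0$ forced by $h(x)=0$, the task reduces to excluding a continuity-stable positive gap between $v_\sigma$ and $v^*$ when stopping is locally optimal. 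This is the genuinely delicate point: it is precisely here that the monotone structure of the stopping problem, rather than the full-support/irreducibility argument alone, must do the work, and I would expect to need the ordering of stopping and continuation regions supplied by monotonicity of $\pi$ and $v^*$ to close it.
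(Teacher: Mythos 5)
Your continuation-case argument is exactly the paper's proof: the paper applies $T_\sigma$ once to the chain $v_\sigma = T_\sigma v_\sigma \leq T_\sigma v^* \leq T v^* = v^*$, evaluates at the point $x$ where equality holds, reads off $\int (v^*(x') - v_\sigma(x'))\,Q(x,\diff x') = 0$ from the definition \eqref{eq:tsd}, and concludes from the full-support assumption on $Q(x,\cdot)$ that $\{v^* > v_\sigma\}$ is Lebesgue-null. Your iteration to all $n$ and the occupation measure $\nu$ are harmless but unnecessary --- as you yourself observe, the $n=1$ term already carries full support.

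The gap is the one you flag: the case $\sigma(x) = 1$. There your proposal stops at the statement that you ``would expect to need the ordering of stopping and continuation regions to close it,'' which is a hope rather than an argument, so as a proof the proposal is incomplete. You should know that the paper's own proof has the identical hole: passing from $(T_\sigma v_\sigma)(x) = (T_\sigma v^*)(x)$ to the displayed equality $\beta(x)\int v^*\,\diff Q(x,\cdot) = \beta(x)\int v_\sigma\,\diff Q(x,\cdot)$ silently cancels the factor $1-\sigma(x)$, which vanishes when $\sigma$ stops at $x$. Moreover, I do not believe the stopping case can be closed as the lemma is stated: take $\sigma \equiv 1$ (stop everywhere), which lies in $\Sigma$ since $T_\sigma v = \pi \in ib\Xsf$ for every $v$, so that $v_\sigma = \pi$. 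If the stopping region $\{v^* = \pi\}$ is nonempty while the continuation region $\{v^* > \pi\}$ has positive Lebesgue measure --- which occurs under the stated assumptions for, say, bounded increasing continuous $\pi$, an AR(1) kernel with normal shocks, constant $\beta < 1$, and small $c$ --- then $v_\sigma(x) = v^*(x)$ at any stopping state $x$ yet $v_\sigma < v^*$ on a set of positive measure. So your instinct that this is ``the genuinely delicate point'' is correct, but no appeal to monotonicity of $\pi$ and $v^*$ will rescue the claim; the lemma needs the additional hypothesis $\sigma(x) = 0$ (or an equivalent repair), and the same caveat propagates to Theorem~\ref{t:oslbk}.
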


\begin{proof}
     Since $v_\sigma \leq v^*$ and $T_\sigma \, v \leq T v$ for all $v \in b\Xsf$,
     we have $v_\sigma = T_\sigma \, v_\sigma \leq T_\sigma \, v^* \leq T v^* =
     v^*$. As a result, $v_\sigma(x) = v^*(x)$ implies 
     $(T_\sigma \, v_\sigma)(x) = (T_\sigma \, v^*)(x)$, which, using the 
     definition of $T_\sigma$ in \eqref{eq:tsd}, yields
     \begin{equation*}
         \beta(x) \int v^*(x') Q(x, \diff x')
         = \beta(x) \int v_\sigma(x') Q(x, \diff x').
     \end{equation*}
     As $\beta(x) > 0$, we obtain $\int (v^*(x') - v_\sigma(x')) Q(x, \diff x')
     = 0$.  If $v^* > v_\sigma$ on a set $E$ of positive Lebesgue measure, then,
     at the same time, we have
     \begin{equation*}
         \int (v^*(x') - v_\sigma(x')) Q(x, \diff x') 
         \geq \int_E (v^*(x') - v_\sigma(x')) Q(x, \diff x') > 0,
     \end{equation*}
     where the last inequality is due to the assumption that $Q(x, \diff x')$
     assigns positive measure to any such $E$.  From this contradiction, we
     conclude that $v^* = v_\sigma$ almost everywhere.
\end{proof}

\begin{lemma}\label{l:cnc}
    Let $f, g$ be two increasing functions in $b\Xsf$ with $f \leq g$. 
    If $g$ is continuous and $f=g$ almost everywhere, then $f=g$.
\end{lemma}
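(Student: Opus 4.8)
The plan is to prove Lemma~\ref{l:cnc} by a density and continuity argument, exploiting the fact that increasing functions have at most countably many discontinuities and that the ``almost everywhere'' agreement set is dense. I want to show that $f(x) = g(x)$ holds at \emph{every} point $x \in \Xsf$, given that $f \leq g$, both are increasing, $g$ is continuous, and $f = g$ Lebesgue-almost everywhere.

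First I would fix an arbitrary $x \in \Xsf$ and aim to sandwich $f(x)$ between $g(x)$ from above (which is immediate from $f \leq g$) and $g(x)$ from below. The key observation is that the set $D = \{x' \in \Xsf : f(x') = g(x')\}$ has full Lebesgue measure, so its complement is Lebesgue-null and in particular $D$ is dense in $\Xsf$ (any interval has positive measure, so it must meet $D$). Hence I can choose a sequence $(x_n) \subset D$ with $x_n \uparrow x$, i.e.\ approaching $x$ from below. Since $f$ is increasing, $f(x_n) \leq f(x)$ for all $n$, and since $x_n \in D$ we have $f(x_n) = g(x_n)$. Therefore $g(x_n) \leq f(x)$ for all $n$.

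Next I would take the limit. Because $g$ is continuous and $x_n \to x$, we have $g(x_n) \to g(x)$, and passing to the limit in $g(x_n) \leq f(x)$ gives $g(x) \leq f(x)$. Combined with the hypothesis $f(x) \leq g(x)$, this forces $f(x) = g(x)$. Since $x \in \Xsf$ was arbitrary, I conclude $f = g$ everywhere.

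The main obstacle — really the only point requiring care — is the selection of an approximating sequence from $D$ on the correct side of $x$. If $x$ is the left endpoint of $\Xsf$ (recall $\Xsf \subset \RR$ is an interval, possibly unbounded), there may be no points of $D$ strictly below $x$; in that case I would instead approach $x$ from above, choosing $x_n \downarrow x$ with $x_n \in D$, use monotonicity in the form $f(x) \leq f(x_n) = g(x_n)$, and take the limit to obtain $f(x) \leq g(x)$ together with the reverse inequality from continuity applied to the lower bound $g(x) \leq g(x_n)$; here one must argue slightly differently since the trivial direction $f \leq g$ already handles one inequality, so approaching from above yields $f(x) \leq g(x)$ for free and I would need a left-approximation or a direct density argument to get $g(x) \leq f(x)$. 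The cleanest fix is to note that since $D$ is dense and $g$ is continuous, $g(x) = \lim_{x_n \to x,\, x_n \in D} g(x_n) = \lim f(x_n)$, and then use one-sided monotonicity whichever side is available (at least one side always has a sequence in $D$ converging to $x$ because $D$ is dense); choosing the lower side when possible gives $f(x_n) \leq f(x)$ and hence $g(x) \leq f(x)$. I would simply remark that density of $D$ guarantees the existence of the requisite monotone approximating sequence and that the boundary cases are handled by the same continuity-plus-monotonicity reasoning.
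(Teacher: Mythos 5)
Your argument is a genuinely different route from the paper's. The paper argues by contradiction: if $f$ has a jump at $x_0$ with left and right limits $a<b$, then continuity of $g$ and the intermediate value theorem produce a nonempty open set $g^{-1}((a,b))$ on which $f<g$, contradicting $f=g$ almost everywhere. You instead give a direct sandwich: pick $x_n \uparrow x$ with each $x_n$ in the full-measure agreement set $D$, use monotonicity of $f$ to get $g(x_n)=f(x_n)\leq f(x)$, and pass to the limit via continuity of $g$ to obtain $g(x)\leq f(x)$, which together with $f\leq g$ forces equality. At every point of $\Xsf$ that is not a left endpoint this is correct, shorter, and avoids both the intermediate value theorem and any discussion of the discontinuity structure of monotone functions.

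The left-endpoint case, however, is a genuine gap, and your proposed patches do not close it. Approaching from above only yields $f(x)\leq f(x_n)=g(x_n)\to g(x)$, i.e.\ $f(x)\leq g(x)$, which you already have; and the remark that ``at least one side always has a sequence in $D$'' does not help when that side is the upper one. In fact no patch is possible: take $\Xsf=[0,1]$, $g(x)=x$, and $f=g$ except $f(0)=-1$. Then $f$ is increasing and bounded, $f\leq g$, $g$ is continuous, and $f=g$ almost everywhere, yet $f(0)\neq g(0)$. So the lemma as stated fails at a left endpoint contained in $\Xsf$. The paper's own proof shares this blind spot (its quantities $a$ and $b$ and the points $y,y'$ on either side of $x_0$ presuppose that $x_0$ is interior), so the defect lies in the statement rather than in your strategy; but as written your boundary discussion asserts something false, and a correct version must either restrict attention to points that are not the left endpoint or add a hypothesis such as openness of $\Xsf$ on the left.
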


\begin{proof}
    If $f$ is continuous, the result follows from the fact that any two continuous measurable 
    functions equal almost everywhere on an
    interval of $\RR$ are equal. Now suppose instead that $f$ is not
    continous at $x_0 \in \Xsf$. Let $a=\lim_{x\uparrow x_0} f(x)$ and 
    $b=\lim_{x\downarrow x_0} f(x)$. 
    Monotonicity and discontinuiy of $f$ implies $a<b$. Since $g$ is continuous,  
    $g((a,b))^{-1}$ is open. We claim that $g((a,b))^{-1}$ is nonempty. 
    Indeed, since $f=g$ almost everywhere, there are $y,y' \in \Xsf$ such that $g(y)\leq a $ 
    and $g(y')\geq b$.  Since $g$ is continuous and increasing, there exist 
    $y'' \in \Xsf$ such that $g(y'')\in (a,b)$. Thus $g((a,b))^{-1}$ is 
    a nonempty open set.  This proves the existence of a set of nonzero Lebesgue
    measure on which $f < g$. Contradiction.
\end{proof}

\begin{proof}[Proof of Theorem~\ref{t:oslbk}]
    Suppose there exists an $x \in \Xsf$ and a policy $\sigma$ such that $v_\sigma(x)
    = v^*(x)$.  Then, by Lemma~\ref{l:vmx}, we have $v_\sigma = v^*$ almost everywhere.
    By Theorem~\ref{t:oslbk}, the function $v^*$ is increasing and continuous.
    By Lemma~\ref{l:tsi}, the operator $T_\sigma$ is invariant on the closed set
    $ib\Xsf$, so the fixed point is in $ib\Xsf$; in particular, $v_\sigma$ is
    also increasing.  By definition, $v_\sigma \leq v^*$.  Hence, by
    Lemma~\ref{l:cnc}, we have $v_\sigma = v^*$.
\end{proof}

\subsection{A Two-State Example}\label{ss:sigi}

In this section,  we show that the irreducibility assumptions used in
Theorem~\ref{t:strong_bk}, Theorem~\ref{t:open_bk}, and Theorem~\ref{t:weak_bk}
cannot be dropped: without irreducibility, \ref{itm:E1}--\ref{itm:E3} are not
generally equivalent. To make the example as simple as possible, we specialize
to the case where $\Xsf$ is finite and study the role of irreducibility in this
setting. In applying these theorems, we take the metric on $\Xsf$ to be the
discrete metric, so that all subsets of $\Xsf$ are open and $\bB$ is all subsets
of $\Xsf$.   We also assume that $\Asf$ is finite and impose the discrete metric
on $\Asf$.  As a result, every map and hence every policy from $\Xsf$ to $\Asf$
is continuous.

Prior to stating our example, we note some elementary properties of
irreducibility on discrete states. Given a transition kernel $Q$ on $\Xsf$, we say that
$y$ in $\Xsf$ is \navy{$Q$-accessible} from $x \in\Xsf$ when there exists an $m
\in \NN$ such that $Q^m(x,y) > 0$.   The usual definition of irreducibility of a
transition kernel $Q$ on finite $\Xsf$ is that, for every $x, y \in \Xsf$, $x$
is $Q$-accessible from $y$ and $y$ is $Q$-accessible from $x$. To distinguish
between different notions of irreducibility, we call this \navy{discrete
irreducibility}.

\begin{lemma}\label{l:dc}
    Let $\sigma$ be any feasible policy. When the state space is finite, the following
    statements are equivalent:
    \begin{enumerate}
        \item $P_\sigma$ is strongly irreducible.
        \item $P_\sigma$ is discretely irreducible.
    \end{enumerate}
\end{lemma}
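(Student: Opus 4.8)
The plan is to prove both implications through the operator-theoretic characterization of strong irreducibility recorded in Section~\ref{ss:si}: namely, $P_\sigma$ is strongly irreducible if and only if, for every nonzero $f \in b\Xsf_+$ and every nonzero $\mu \in b\Xsf'_+$, there is an $m \in \NN$ with $\inner{\mu, P_\sigma^m f} > 0$. The finiteness of $\Xsf$ lets me choose especially simple test elements $f$ and $\mu$, reducing the whole statement to pointwise positivity of the iterates $P_\sigma^m$.

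For (a) $\Rightarrow$ (b), I would fix $x, y \in \Xsf$ and show that $y$ is $P_\sigma$-accessible from $x$. Since $\Xsf$ carries the discrete metric, the singleton $\{y\}$ is open and $f \coloneq \1_{\{y\}}$ is a nonzero element of $b\Xsf_+$; moreover $\delta_x$ is a nonzero element of $b\Xsf'_+$ by Lemma~\ref{l:pe}. Strong irreducibility then supplies an $m$ with $\inner{\delta_x, P_\sigma^m f} > 0$, and since $\inner{\delta_x, P_\sigma^m \1_{\{y\}}} = P_\sigma^m(x, y)$, this is exactly $P_\sigma^m(x,y) > 0$. As $x$ and $y$ were arbitrary, discrete irreducibility follows. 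This step closely parallels the argument in Lemma~\ref{l:siwi}(a).

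The reverse implication (b) $\Rightarrow$ (a) is where the only real care is needed. Because $\Xsf$ is finite, $b\Xsf$ is isomorphic to $\RR^{|\Xsf|}$ with the coordinatewise order, and every positive functional $\mu \in b\Xsf'_+$ is represented by a nonnegative weight vector, i.e.\ $\mu = \sum_{w \in \Xsf} \mu_w \delta_w$ with each $\mu_w \geq 0$. I would verify this elementary representation first, since it is the crux. Given a nonzero $f \in b\Xsf_+$ and a nonzero such $\mu$, pick $z$ with $f(z) > 0$ and $w$ with $\mu_w > 0$. Discrete irreducibility gives an $m$ with $P_\sigma^m(w, z) > 0$, and expanding $\inner{\mu, P_\sigma^m f} = \sum_{u, v} \mu_u P_\sigma^m(u,v) f(v)$ into a sum of nonnegative terms, the single term indexed by $(w, z)$ is already strictly positive. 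Hence $\inner{\mu, P_\sigma^m f} > 0$ for this $m$, and strong irreducibility follows from the characterization.

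The main obstacle, such as it is, lies in this second direction: one must justify that positive functionals on $b\Xsf$ reduce to nonnegative combinations of point evaluations when $\Xsf$ is finite. On a general infinite state space the dual $b\Xsf'$ is the awkward space of finitely additive measures, but finiteness collapses this to plain linear algebra over $\RR^{|\Xsf|}$, after which the accessibility hypothesis transfers directly to positivity of the pairing.
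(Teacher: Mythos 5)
Your proposal is correct and follows essentially the same route as the paper's own proof: indicator functions and point evaluations as test elements for (a)\,$\Rightarrow$\,(b), and for (b)\,$\Rightarrow$\,(a) the identification of nonzero positive functionals on $b\Xsf$ with nonzero nonnegative weight vectors, followed by bounding the double sum $\sum_{u,v}\mu_u P_\sigma^m(u,v)f(v)$ below by a single strictly positive term. The only difference is that you flag the dual-space identification as a step requiring justification, whereas the paper asserts it in one line; your extra care is harmless and arguably an improvement.
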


\begin{proof}
    ((a) $\implies$ (b)) Fix $x, y \in \Xsf$.  Let $\1_x(z)$ equal $1$ when $z=x$ and
    zero elsewhere.  Let $\1_y$ be defined analogously.  Since $\1_x \in b\Xsf_+$ and 
    $\1_y \in (b\Xsf_+)'$, there exists an $m \in \NN$ with 
    $\inner{\1_x, P_\sigma^m \1_y} > 0$. But 
    $\inner{\1_x, P_\sigma^m \1_y} = P_\sigma^m(x, y)$, so $y$ is
    $P_\sigma$-accessible from $x$.  This proves that $P_\sigma$ is discretely
    irreducible.

    ((b) $\implies$ (a)) Fix a nonzero $f \in b\Xsf_+$ and nonzero 
    $\mu\in (b\Xsf_+)'$.  Since $\Xsf$ is finite, these element are just
    maps from $\Xsf$ to $\RR_+$ and, as both are nonzero, we can find $\bar x,
    \bar y \in \Xsf$ such that $f(\bar y)>0$ and $\mu(\bar x)>0$.  Moreover,
    since $P_\sigma$ is discretely irreducible, there is $m\in\NN$, such that
    $P_\sigma^m(\bar x,\bar y)>0$.  As a result,
    \begin{equation*}
        \inner{\mu, P_\sigma^m f}
        = \sum_x ( P_\sigma^m f)(x) \mu(x)
        = \sum_x \sum_y f(y) P_\sigma^m (x, y) \mu(x)
        \geq P_\sigma^m(\bar x, \bar y) f(\bar y) \mu(\bar x) >0.
    \end{equation*}
    This proves that $P_\sigma$ is strongly irreducible.
\end{proof}

Now consider a two-state MDP with $\Xsf = \{1, 2\}$ and $\Asf = \{1, 2\}$. The
feasible coprrespondence is defined by $\Gamma(1) = \{1, 2\}$ and $\Gamma(2) =
\{2\}$. The reward function is defined by $r(i,j) = r_{ij}$ with
\begin{equation*}
    \begin{pmatrix}
        r_{11} & r_{12} \\
        r_{21} & r_{22} 
    \end{pmatrix}
    =
    \begin{pmatrix}
        0 & 1 \\
        0 & 2
    \end{pmatrix}.
\end{equation*}
We set $\beta = 0.9$. The transition probabilities $P(x,a,x')$ are given by
\begin{align*} 
P(1, 1, \cdot) = (1, 0), \quad P(1, 2, \cdot) = (1, 0), 
\quad P(2, 1, \cdot) = (0, 1), \quad P(2, 2, \cdot) = (0, 1).
\end{align*}
By the definition of the feasible correspondence $\Gamma$, there are only two 
feasible policies $\Sigma = \{\sigma, \pi\}$, where $\sigma(x) = 2$ and 
$\pi(x) = x$ for all $x \in \Xsf$. The transition probabilities following the 
two policies are given by
\begin{equation*}
    P_\sigma = P_\pi =
    \begin{pmatrix}
        1 & 0 \\
        0 & 1
    \end{pmatrix}.
\end{equation*}
Observe that $P_\sigma = P_\pi$ fails to be discretely irreducible.  By
Lemma~\ref{l:siwi} and Lemma~\ref{l:dc}, this means that this transition
kernel is neither strongly irreducible, nor weakly irreducible, nor open set irreducible.

Let us compute the lifetime value functions for the optimal $\sigma$ and $\pi$.
For policy $\sigma$, we have $r_\sigma = (r_{12}, r_{22}) = (1, 2)$ and
$v_\sigma = (10, 20)$. For $\pi$, we have $r_\pi = (r_{11}, r_{22}) = (0, 2)$
and $v_\pi = (0, 20)$. Since, $v^*(x) \coloneq \sup_{s \in \Sigma} v_s(x) =
v_\sigma(x)$, we see that $\sigma$ is an optimal policy. On the other hand,
$v_\pi(2) = v^*(2)$, but $v_\pi(1) < v^*(1)$, indicating that optimality in one
state does not guarantee global optimality.  This confirms that irreducibility
cannot be dropped from the statements of Theorems~\ref{t:strong_bk},
\ref{t:open_bk}, and \ref{t:weak_bk}.

\section{Extensions and Future Work}\label{s:ext}

Using MDP optimality results from \cite{bauerle2011markov} or
\cite{bertsekas2022abstract}, it should be possible to extend our results to the
case of unbounded rewards by replacing the ordinary supremum norm on $b\Xsf$
with a weighted supremum norm.  Also, while our results have focused on standard
MDPs with constant discount factors, one useful variation of this model is MDPs
with state-dependent discount factors, so that $\beta$ becomes a map from $\Xsf$
to $\RR_+$ (see, e.g., \cite{stachurski2021dynamic}).  We conjecture that
the main results we obtained for MDPs will extend to generalized ADPs
with state-dependent discounting under suitable stability and irreducibility
assumptions.   Finally, it seems likely that results similar to Theorem~\ref{t:strong_bk} will 
be valid for some continuous time MDPs, as well as at least some of the
nonstandard discrete time dynamic programs discussed in \cite{bertsekas2022abstract} 
and \cite{sargent2025dynamic}. These topics are also left for future work.

\newpage

\bibliographystyle{ecta}
\bibliography{localbib}

\newpage

\appendix

\section{Algorithm}
This section provides a detailed description of the algorithm used to train the 
policy network described in Section~\ref{s:os}. In the state transition step, the algorithm uses the wealth 
bounds $(w_{\min}, w_{\max})$ to ensure that the wealth process remains within 
these bounds to match the OPI solution, which operates on a wealth grid.
In practice, we can replace it with $w_{t+1} \gets \eta_{t} (w_t - c) + y_{t}$. 

We implement the policy network and optimizer using PyTorch. 
The training is conducted on an NVIDIA RTX 5070 Ti GPU with CUDA 12.8. 
Our implementation uses Python 3.13.2 and leverages PyTorch's built-in automatic 
differentiation \citep{paszke2017automatic} to compute gradients of 
the loss function efficiently.

\begin{algorithm}[H]
    \caption{\label{alg:dpvi}Deterministic Policy Gradient}
    \small
    \DontPrintSemicolon
    \KwIn{Discount factor $\beta$, distributions $(\psi, \phi)$, episodes $K$, rollout steps $T$, 
    batch size $N$, learning rate $\alpha$, wealth bounds $(w_{\min}, w_{\max})$}
    Initialize policy network $\hat \sigma$ and optimizer\;
    \For{episode $k = 1$ to $K$}{
        Initialize states $w \gets \bar{w} \in \RR^N$ \; 
        Pre-sample shocks vectors $\eta_{t} \sim \psi$, $y_{t} \sim \phi$ for all $t = 0, 1, \ldots, T-1$\;
        \For{$t = 0$ to $T-1$}{
            $c \gets \hat \sigma(w_t; \theta)$ \; 
            $w_{t+1} \gets \min\{\max\{\eta_{t} (w_t - c) + y_{t}, w_{\min}\}, w_{\max}\}$ \tcp*{Transitions}
        }
        $\theta \gets \theta - \alpha \nabla_\theta L(\theta) = 
        \theta + \alpha \nabla_\theta \frac{1}{N}\sum_{i=1}^N \sum_{t=0}^{T-1} \beta^t u(\hat \sigma(w_{i,t}; \theta))$\;
    }
    \KwRet{$\hat \sigma(\cdot; \theta)$}
\end{algorithm}

\newpage 

\section{Irreducible Optimal Savings MDP with $\bar w = 50$} \label{app:irr_50}
\begin{figure}[ht]
    \centering
    \includegraphics[width=\textwidth]{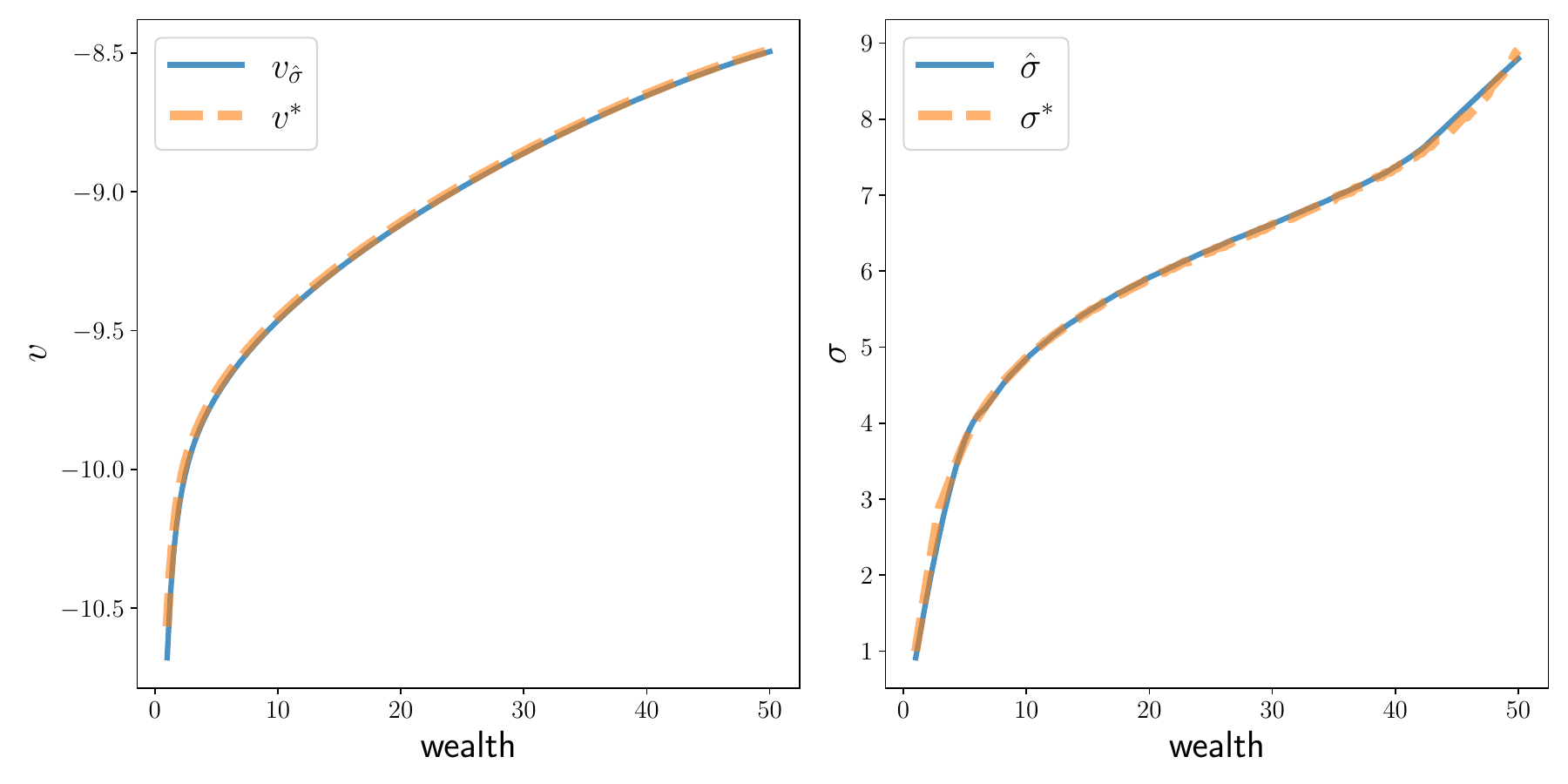}
    \caption{$\hat v_\sigma$ and $\hat \sigma$ with $\bar w = 50$ against
    the OPI solutions.}
    \label{fig:dpg_vfi_irr_50}

    \vspace{2em}
    \centering
    \includegraphics[width=0.75\textwidth]{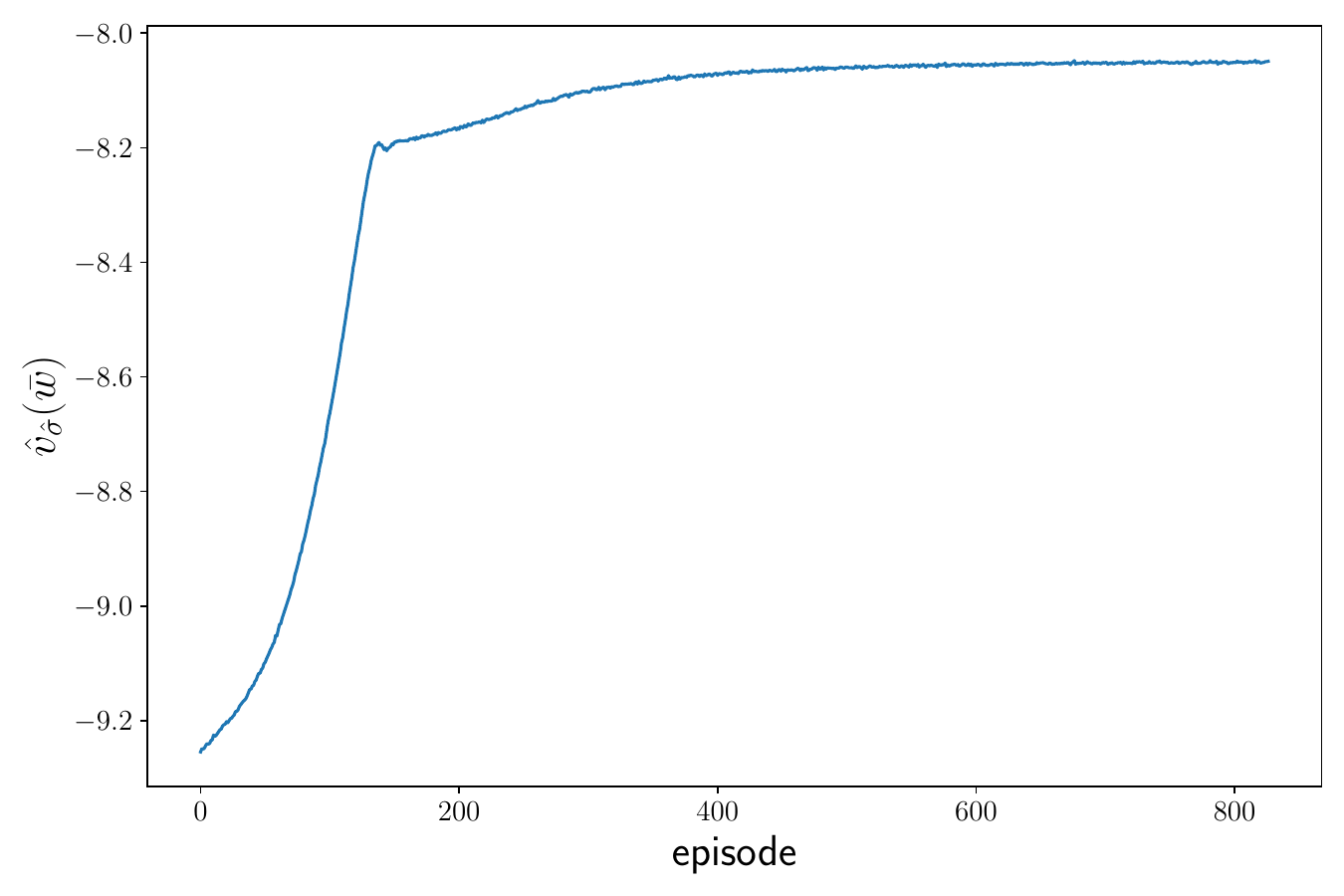}
    \caption{$\hat v_{\hat \sigma}(\bar w)$ over training episode for
    the irreducible optimal saving model at $\bar w = 50$.}
    \label{fig:irr_training_50}
\end{figure}

\section{Reducible Optimal Savings MDP with $\bar w = 50$}\label{app:red_50}

In this appendix, we discuss the result of reducible optimal saving MDPs 
with initial wealth $\bar w = 50$. As shown in Figure~\ref{fig:dpg_vfi_red_50}, 
the policy network $\hat \sigma$ is able to approximate the optimal policy 
$\sigma^*$ in the reachable subset of the state space. This provides strong 
empirical evidence for Lemma~\ref{l:bk_access}.

To emphasize this point, Figure~\ref{fig:wealth_comparison} illustrates one 
sample of the wealth dynamics under the policy network $\hat \sigma$ at 
epoch 750 starting from $\bar w = 1$ and $\bar w = 50$. The wealth process 
initialized at $\bar w = 50$ explores a substantially broader range of wealth 
levels compared to the process starting from $\bar w = 1$.
This wider exploration of the state space directly contributes to the improved 
performance observed in Figure~\ref{fig:dpg_vfi_red_50}.

We note that the two sequences converge to similar patterns after a brief 
initial period because we used identical random seeds when generating shocks 
$\eta_t$ and $y_t$ when training all models. 
This technical choice minimizes the influence of factors other than 
the initial wealth level on the comparisons of model performance. The similarity 
in the wealth trajectories indicates that the policy network gives 
similar consumption decisions at lower wealth levels.

Nevertheless, the policy still exhibits limitations: since consumption must 
be strictly positive (i.e., $\hat \sigma(w) > 0$ for all $w \in \Wsf$), the 
algorithm struggles to accurately approximate the optimal policy for very high 
wealth values which are not reachable under $\bar w = 50$ given the bounded 
shocks. This discrepancy paired with bounded uniform shocks causes the 
$\hat \sigma$-value function to systematically underestimate the true value 
function in the upper tail of the wealth state in the left panel of 
Figure~\ref{fig:dpg_vfi_red_50}.
\begin{figure}
    \centering
    \includegraphics[width=\textwidth]{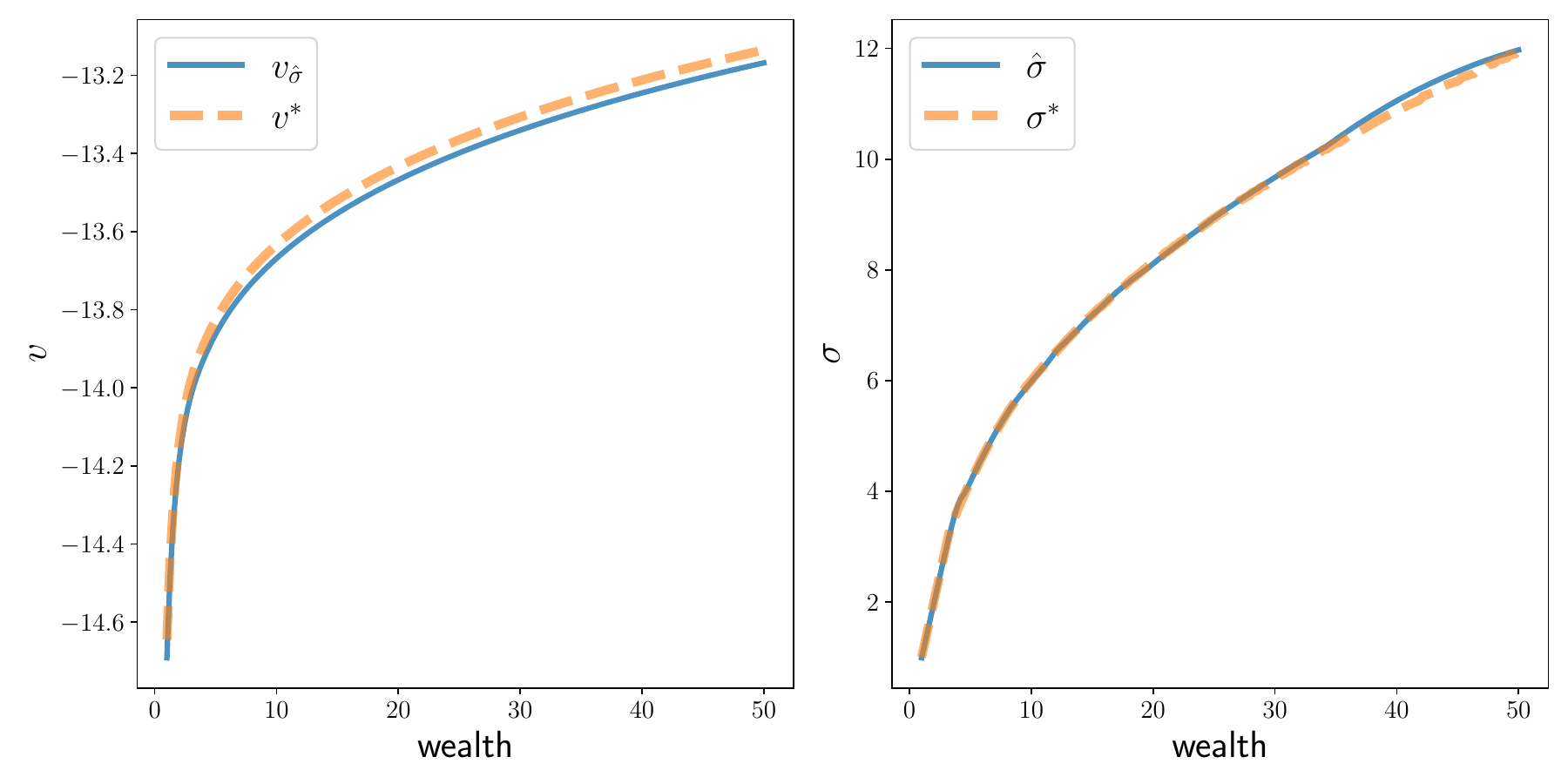}
    \caption{$\hat v_\sigma$ and $\hat \sigma$ with $\bar w = 50$ vs OPI solutions}
    \label{fig:dpg_vfi_red_50}

    \vspace{2em}
    \centering
    \includegraphics[width=0.75\textwidth]{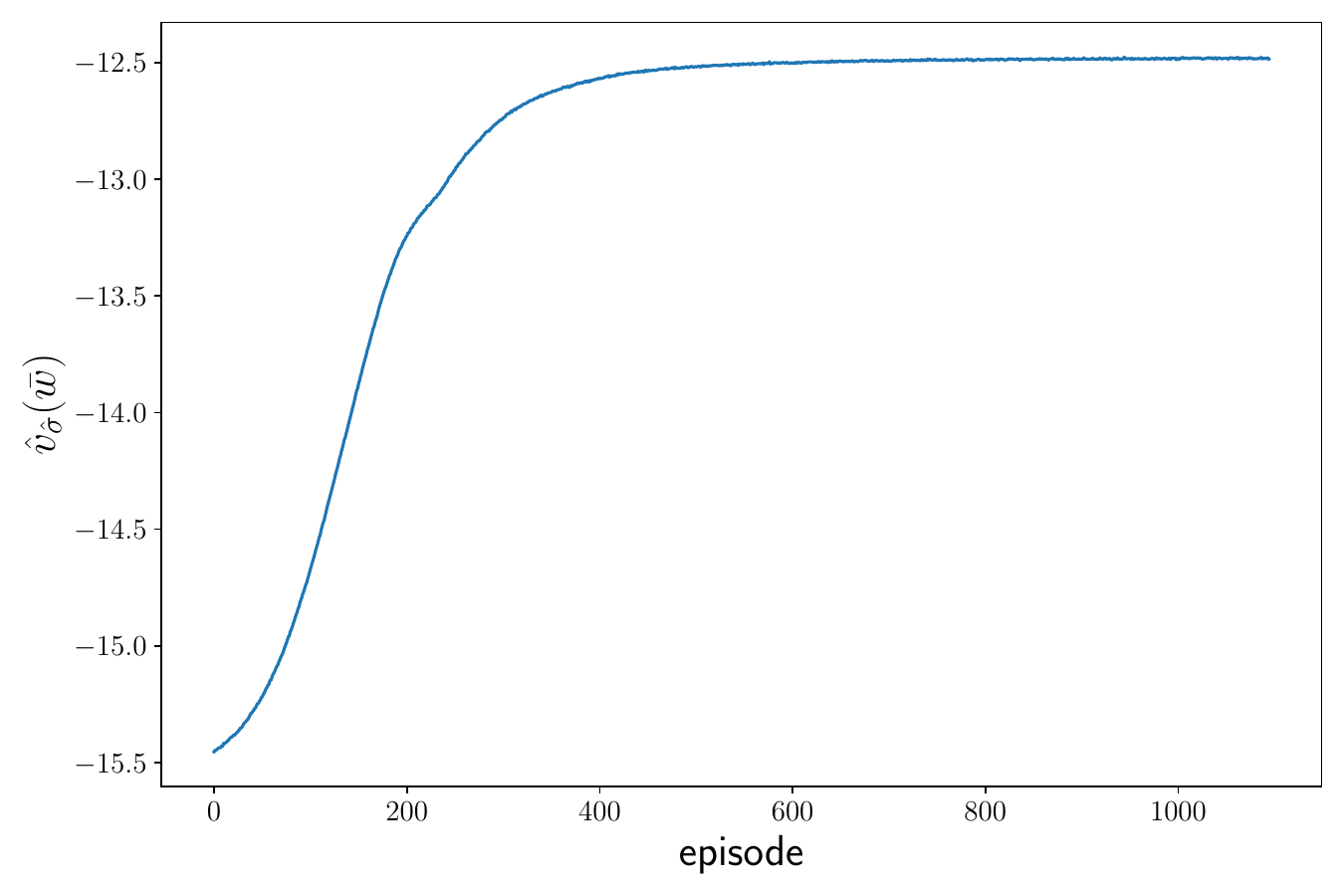}
    \caption{$\hat v_{\hat \sigma}(\bar w)$ over training episode for
    the reducible optimal saving model at $\bar w = 50$.}
    \label{fig:red_training_50}
\end{figure}
These empirical findings emphasize the importance of Lemma~\ref{l:bk_access}, 
which formally establishes that when the irreducibility condition fails, 
the policy network achieves near-optimal performance in 
reachable subsets of the state space starting from $\bar w =50$.

\begin{figure}
    \centering
    \includegraphics[width=0.75\textwidth]{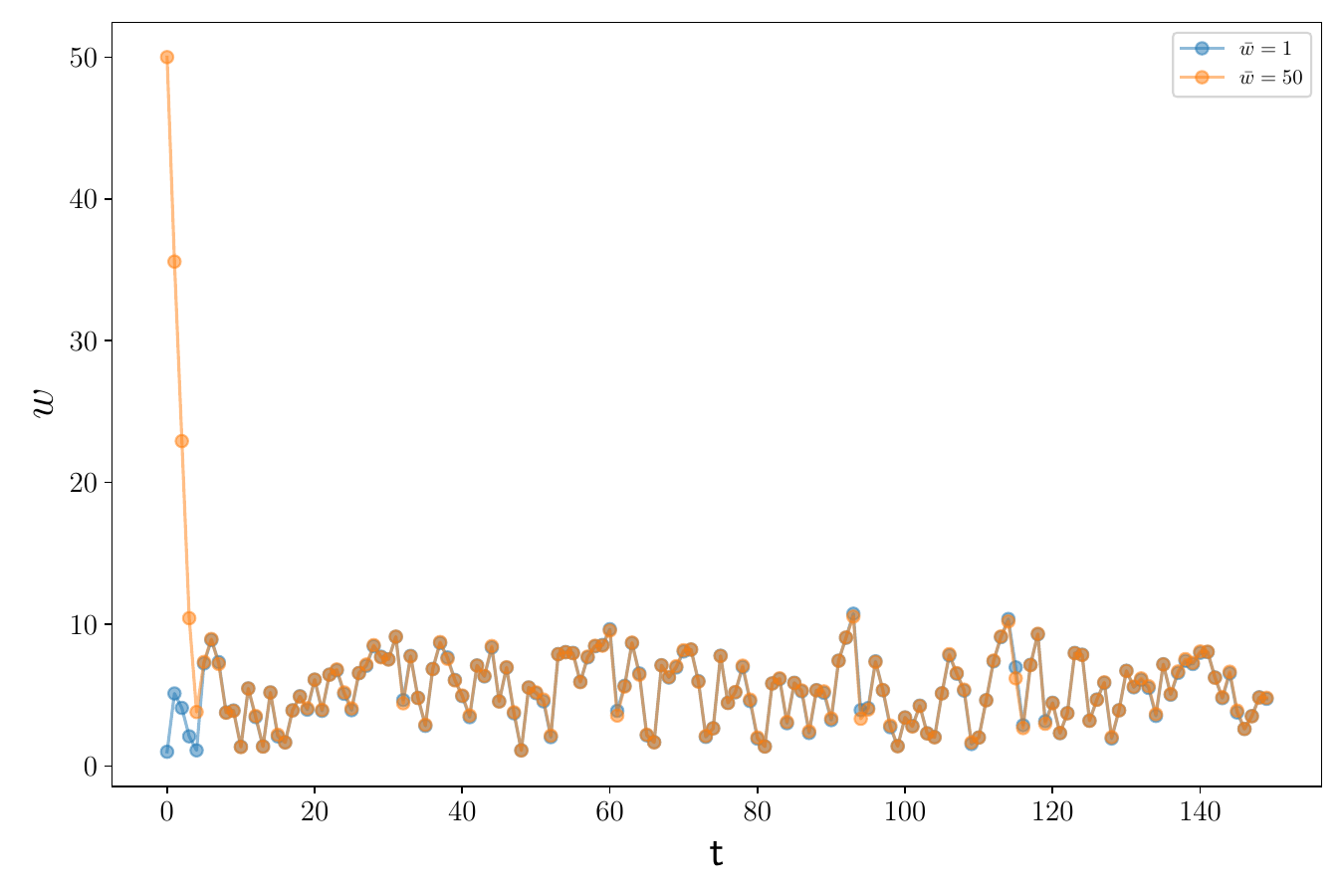}
    \caption{Comparison of wealth trajectories for $\bar w = 1$ and $\bar w = 50$ in 
    a typical rollout.}
    \label{fig:wealth_comparison}
\end{figure}

\end{document}